\DeclareMathAlphabet{\mathcal}{OMS}{cmsy}{m}{n} 
\newtheorem{thm}{Theorem}[section]
\newtheorem*{thm*}{Theorem}
\newtheorem{lem}[thm]{Lemma}
\newtheorem{prop}[thm]{Proposition}
\newtheorem{cor}[thm]{Corollary}
\theoremstyle{definition}
\newtheorem{rem}[thm]{Remark}
\numberwithin{equation}{section}
\crefname{thm}{theorem}{theorems}
\crefname{rem}{remark}{remarks}
\crefname{prop}{proposition}{propositions}
\crefname{lem}{lemma}{lemmas}
\crefname{por}{porism}{porisms}
\crefname{identity}{identity}{identities}
\crefname{equation}{}{}
\DeclareMathOperator{\GL}{GL}
\DeclareMathOperator{\chr}{char}
\DeclareMathOperator{\Hilb}{Hilb}
\DeclareMathOperator{\rank}{rank}
\DeclareMathOperator{\Hom}{Hom}
\DeclareMathOperator{\End}{End}
\DeclareMathOperator{\Stab}{Stab}
\DeclareMathOperator{\sign}{sgn}
\DeclareMathOperator{\Res}{Res}
\DeclareMathOperator{\Spec}{Spec}
\DeclareMathOperator{\Frac}{Frac}
\newcommand{\gfrac}[2]{\genfrac{[}{]}{0pt}{}{#1}{#2}}
\newcommand{\cC}{\check{\mathrm{C}}}
\newcommand{\Cech}{\v{C}ech}
\newcommand{\sym}{\mathcal{S}}
\newcommand{\alt}{\mathcal{A}}
\newcommand{\md}[1]{{\left\lvert #1 \right\lvert}}
\newcommand{\deff}[1]{{\color{Cerulean}#1}}
\newcommand{\andd}{\quad\text{and}\quad}
\newcommand{\into}{\longhookrightarrow}
\newcommand{\smatrix}[1]{\left(\begin{smallmatrix} #1 \end{smallmatrix}\right)}
\newcommand{\DD}[2]{\operatorname{D}_{#1 \,\mid\, #2}}
\newcommand{\PP}[3]{\operatorname{P}_{#1 \,\mid\, #2}^{#3}}
\newcommand{\etale}{\'etale}
\let\mapsto\longmapsto
\let\to\longrightarrow
\begin{document}

\title{Homological properties of invariant rings of permutation groups}

\author{Aryaman Maithani}
\address{Department of Mathematics, University of Utah, 155 South 1400 East, Salt Lake City, UT~84112, USA}
\email{maithani@math.utah.edu}

\thanks{The author was supported by NSF grants DMS~2101671 and DMS~2349623 and a Simons Dissertation Fellowship.} 

\subjclass[2020]{13A50}

\keywords{Polynomial invariants, modular invariant theory, permutation action, local cohomology, differential operators}

\begin{abstract} 
	Consider the action of a subgroup $G$ of the permutation group on the polynomial ring $S \coloneqq k[x_{1}, \ldots, x_{n}]$ via permutations. 
	We show that if $k$ does not have characteristic two, then the following are independent of $k$: 
	the $a$-invariant of $S^{G}$, 
	the property of $S^{G}$ being quasi-Gorenstein, 
	and the Hilbert functions of $H_{\mathfrak{m}}^{n}(S)^{G}$ as well as $H_{\mathfrak{n}}^{n}(S^{G})$; 
	moreover, these Hilbert functions coincide. 
	In particular, 
	being independent of characteristic, they may be computed using characteristic zero techniques,
	such as Molien's formula. 
	In characteristic two, we show that the ring of invariants is always quasi-Gorenstein,
	compute the $a$-invariant explicitly, 
	and show that the Hilbert functions of $H_{\mathfrak{m}}^{n}(S)^{G}$ and $H_{\mathfrak{n}}^{n}(S^{G})$ agree up to a shift, 
	given by 
	the number of transpositions. 
	We determine when the inclusion $S^{G} \longhookrightarrow S$ splits, thereby proving the Shank--Wehlau conjecture for permutation subgroups. 
	Lastly, we determine the ring of $k$-linear differential operators on $S^{G}$, and show that each differential operator lifts to one over $\mathbb{Z}$. 
\end{abstract}

\maketitle

\section{Introduction} \label{sec:introduction}
	
	Let $k$ be a field, and $G$ a subgroup of the permutation group $\sym_{n}$. 
	The group $G$ has a natural action on the polynomial ring $S \coloneqq k[x_{1}, \ldots, x_{n}]$ via $k$-algebra automorphisms determined by
	\begin{equation*} 
		\sigma \cdot x_{i} \coloneqq x_{\sigma(i)}.
	\end{equation*}
	The \deff{invariant subring} is $S^{G} \coloneqq \{f \in S : \sigma(f) = f \text{ for all $\sigma \in G$ }\}$. 
	The invariant rings of permutation groups have a rich history in producing counterexamples to several questions, 
	both within and outside the realm of invariant theory. 

	Outside the realm of invariant theory, 
	Samuel \cite{Samuel:UFD} raised the following question: 
	are all noetherian UFDs Cohen--Macaulay? 
	The first counterexample to this question was given by Bertin \cite{Bertin}; 
	namely the ring of invariants $S^{G}$ of the permutation action of $\langle (1234) \rangle$ on $\mathbb{F}_{2}[w,x,y,z]$. 
	Later, Fossum and Griffith \cite{FossumGriffith:UFDnotCM} showed that the completion of $S^{G}$ at its homogeneous maximal ideal produces an example of a \emph{complete} local UFD that is not Cohen--Macaulay. 

	Within the realm of invariant theory of finite groups, one considers more generally a finite subgroup $G$ of $\GL_{n}(k)$ with its natural action on $S$ via linear change of coordinates. 
	When $G$ is \emph{nonmodular}, i.e., 
	$\chr(k)$ does not divide the order of $G$, 
	one has a wealth of results such as: 
	\begin{enumerate}[nosep]
		\item the inclusion $S^{G} \into S$ splits $S^{G}$-linearly,
		and thus, the invariant ring $S^{G}$ is Cohen--Macaulay; 
		\item (Noether's bound) the invariant ring is generated by invariants of degree at most the order of $G$; 
		and 
		\item there is an $S^{G}$-linear isomorphism 
		$H_{\mathfrak{m}}^{n}(S)^{G} \cong H_{\mathfrak{n}}^{n}(S^{G})$, 
		where $\mathfrak{m}$ and $\mathfrak{n}$ denote the respective homogeneous maximal ideals of $S$ and $S^{G}$. 
	\end{enumerate}
	Each of these is known to be false in the modular case; 
	indeed, one can find counterexamples using permutation groups: 
	for (1) and (2), consider the action of the cyclic permutation group
	$\langle (12)(34)(56) \rangle$ on $S \coloneqq \mathbb{F}_{2}[x_{1}, \ldots, x_{6}]$, 
	and for (3), the action of the alternating group $\alt_{3}$ on $\mathbb{F}_{3}[x_{1}, x_{2}, x_{3}]$ \cite[Example 5.2]{GoelJeffriesSingh}.	

	In \cite{Hashimoto:Frational}, Hashimoto constructs examples of permutation groups $G$ acting on a polynomial ring $S$ such that $S^{G}$ is $F$-rational but not $F$-regular. 
	As a consequence of our results, this cannot happen if $G$ is a subgroup of the alternating group;
	indeed, $S^{G}$ is then quasi-Gorenstein [\Cref{cor:quasi-Gorenstein-characterisation}],
	and an $F$-rational ring is Cohen--Macaulay, and thus, 
	$S^{G}$ would then be Gorenstein, 
	for which the notions of $F$-rationality and $F$-regularity coincide \cite[Theorem 4.2 (g)]{HochsterHuneke:FregTestSmooth}.

	On the other hand, we now draw attention to the fact that 
	permutation groups enjoy some `uniform' properties 
	stemming from the fact that the definition of the action is 
	independent of the ground field.
	The key point here is that $S = k[x_{1}, \ldots, x_{n}]$ has a $k$-basis given by monomials, and that any subgroup $G$ of $\sym_{n}$ permutes the elements of this basis. 
	This simple observation has the immediate consequence that $S^{G}$ has a $k$-basis given by orbit sums of monomials; 
	this description is independent of the ground field $k$, 
	and in particular, we deduce that the Hilbert series of the invariant ring is independent of $k$. 
	This description is also used to prove that $S^{G}$ is $F$-pure when $G \le \sym_{n}$ \cite[page 77]{HochsterHuneke:BCMA}. 
	Hashimoto and Singh \cite{HashimotoSingh} show that 
	$S^{G}$ is $F$-pure and has finite $F$-representation type whenever $G$ acts via permutations, or more generally, via a monomial representation. 
	Many results of this paper are in this spirit: 
	we show that various homological properties and invariants are independent of the ground field --- at least when the characteristic is not two;
	this discrepancy can be attributed to the fact that the character $\sign \colon \sym_{n} \to k^{\times}$ is trivial if $\chr(k) = 2$. 
	Our main results are summarised as follows:

	\begin{thm*}
		Let $k$ be a field, $A$ any ring, and
		$G$ a subgroup of $\sym_{n}$ acting on the polynomial rings
		$S \coloneqq k[x_{1}, \ldots, x_{n}]$ 
		and $S_{A} \coloneqq A[x_{1}, \ldots, x_{n}]$ via permutations. 
		Let $c$ be the number of transpositions in $G$, 
		and $N$ the subgroup generated by the transpositions.
		Let $\mathfrak{m}$ and $\mathfrak{n}$ denote the respective homogeneous maximal ideals of $S$ and $S^{G}$.

		If $\chr(k) \neq 2$, then
		\begin{enumerate}[label=(\arabic*), nosep]
			\item there is an isomorphism of graded $k$-vector spaces 
			$H_{\mathfrak{m}}^{n}(S)^{G} \simeq H_{\mathfrak{n}}^{n}(S^{G})$, and 
			the Hilbert functions of the top local cohomology modules of $S^{G}$ and $S_{\mathbb{Q}}^{G}$ coincide;
			\item the canonical module $\omega_{S^{G}}$ is isomorphic to $S^{G}_{\sign}(-n)$; 
			\item the $a$-invariants of $S^{G}$ and $S_{\mathbb{Q}}^{G}$ coincide, and equal $\deg \Hilb(S^{G})$;
			\item the ring $S^{G}$ is quasi-Gorenstein if and only if $S_{\mathbb{Q}}^{G}$ is quasi-Gorenstein
			if and only if $a(S^{G}) = -(c+n)$; 
			whether the latter holds may be determined using Molien's formula.
		\end{enumerate}

		If $\chr(k) = 2$, then
		\begin{enumerate}[label=(\arabic*), nosep]
			\item there is an isomorphism of graded $k$-vector spaces 
			$H_{\mathfrak{m}}^{n}(S)^{G} \simeq H_{\mathfrak{n}}^{n}(S^{G})(-c)$;
			\item the canonical module $\omega_{S^{G}}$ is isomorphic to $S^{G}(-c-n)$; 
			\item the $a$-invariant of $S^{G}$ equals $-(c+n)$;
			\item the ring $S^{G}$ is quasi-Gorenstein.
		\end{enumerate}

		In any characteristic, we have
		\begin{enumerate}[label=(\arabic*), resume]
			\item the inclusion $S^{G} \into S$ splits $S^{G}$-linearly 
			if and only if 
			$\chr(k)$ does not divide $\md{G/N}$;
			\item the natural map 
			$
				\DD{S_{\mathbb{Z}}^{G}}{\mathbb{Z}} \otimes_{\mathbb{Z}} k
				\to
				\DD{S^{G}}{k}
			$
			is an isomorphism, 
			i.e., each $k$-linear differential operator on $S^{G}$ arises from a $\mathbb{Z}$-linear one on $S_{\mathbb{Z}}^{G}$. 
		\end{enumerate}
	\end{thm*}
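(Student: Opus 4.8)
The plan is to route everything through the $G$-equivariant structure of the top local cohomology $H_{\mathfrak m}^{n}(S)$ and through the intermediate ring $S^{N}$, which turns out to be a polynomial ring; write $S^{G}_{\chi} := \{f \in S : \sigma f = \chi(\sigma) f \text{ for all } \sigma \in G\}$ for a linear character $\chi$ of $G$. Graded local duality gives a $G$-equivariant isomorphism $H_{\mathfrak m}^{n}(S) \cong \omega_{S}^{\vee}$, and $\omega_{S} = \Omega_{S/k}^{n} \cong S_{\sign}(-n)$ as $G$-equivariant graded modules, since $\sigma$ scales $dx_{1} \wedge \cdots \wedge dx_{n}$ by $\sign(\sigma)$ — equivalently, the sign conventions in the \Cech\ complex on $x_{1}, \dots, x_{n}$ make $\sigma$ send the basis element $x^{-a}$ ($a \in \mathbb{Z}_{\ge 1}^{n}$) to $\sign(\sigma)\, x^{-\sigma a}$. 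If $V_{e}$ denotes the permutation $k[G]$-module on the degree-$e$ monomials of $S$, this identifies $H_{\mathfrak m}^{n}(S)_{-(e+n)}$ with $V_{e} \otimes \sign$, so
\[
	\dim_{k}\bigl(H_{\mathfrak m}^{n}(S)^{G}\bigr)_{-(e+n)} = \dim_{k}(V_{e}\otimes \sign)^{G} = \dim_{k}(S^{G}_{\sign})_{e},
\]
and decomposing $V_{e}$ over $G$-orbits and applying Frobenius reciprocity shows this equals the number of orbits of degree-$e$ monomials whose stabiliser lies in $\alt_{n}$ — visibly independent of $k$ for $\chr k \ne 2$, and equal to $\dim_{k}(S^{G})_{e}$ when $\chr k = 2$ (where $\{\pm 1\}$ is the trivial group, so $\sign$, and the character $\epsilon$ below, are trivial). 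A short manipulation of the characteristic-free Molien formula $\Hilb(S^{G})(t) = \md{G}^{-1}\sum_{g}\prod_{c}(1 - t^{|c|})^{-1}$ (product over cycles $c$ of $g$), using $\sign(g) = (-1)^{n - \#\{\text{cycles of }g\}}$, then yields the rational-function identity $\Hilb(H_{\mathfrak m}^{n}(S)^{G})(t) = (-1)^{n}\Hilb(S^{G})(t)$ when $\chr k \ne 2$, and $\Hilb(H_{\mathfrak m}^{n}(S)^{G})(t) = t^{-n}\Hilb(S^{G})(t^{-1})$ when $\chr k = 2$.

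Next I would study $S^{N}$. Since $N$ is generated by transpositions, its non-fixed points partition into blocks $B_{1}, \dots, B_{r}$ with $N = \sym_{B_{1}} \times \cdots \times \sym_{B_{r}}$, so $S^{N}$ is a polynomial ring in every characteristic, on the elementary symmetric polynomials of the blocks and the fixed-point variables; adding their degrees gives $a(S^{N}) = -(c+n)$, i.e.\ $\omega_{S^{N}} \cong S^{N}(-(c+n))$. As $N \trianglelefteq G$ and $S^{G} = (S^{N})^{G/N}$, I recover $\omega_{S^{G}}$ from the $G/N$-action on $\omega_{S^{N}}$: the only pseudo-reflections of $G$ on $S$ are transpositions, all in $N$, so $S^{G} \subseteq S^{N}$ is unramified in codimension one (the extensions $S/S^{G}$ and $S/S^{N}$ ramify along the same divisors $x_{i}=x_{j}$ with $(ij)\in G$, with the same inertia group $\mathbb{Z}/2$ and the same local behaviour, even in the wild case $\chr k = 2$), and since $S^{N}$ is regular this gives $\omega_{S^{G}} \cong (\omega_{S^{N}})^{G/N}$ equivariantly. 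Pushing $\omega_{S^{N}}\otimes_{S^{N}}S$ into $\omega_{S}$ along the Jacobian isomorphism (multiplication by $\Delta := \prod_{(ij)\in G}(x_{i}-x_{j})$, of degree $c$, on which $\sigma$ acts by a character $\epsilon(\sigma)\in\{\pm 1\}$ with $\epsilon|_{N}=\sign|_{N}$) and taking $N$-invariants identifies $\omega_{S^{N}}$ with $S^{N}(-(c+n))$ twisted by $\epsilon\cdot\sign$, whence $\omega_{S^{G}} \cong S^{G}_{\epsilon\cdot\sign}(-(c+n))$. If $\chr k = 2$ this is $S^{G}(-(c+n))$, a free module, so $S^{G}$ is quasi-Gorenstein with $a(S^{G}) = -(c+n)$ — parts (2)--(4) in that case. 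If $\chr k \ne 2$, every $\sign$-semi-invariant is divisible by $\Delta$ (it vanishes on each $x_{i}=x_{j}$), so multiplication by $\Delta^{-1}$ is a degree $-c$ isomorphism $S^{G}_{\sign}\xrightarrow{\ \sim\ }S^{G}_{\epsilon\sign}$, giving $\omega_{S^{G}} \cong S^{G}_{\sign}(-n)$, which is part (2).

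The consequences then follow formally. In characteristic $\ne 2$, graded local duality gives $H_{\mathfrak n}^{n}(S^{G}) \cong \omega_{S^{G}}^{\vee}$, so $\Hilb(H_{\mathfrak n}^{n}(S^{G}))(t) = \sum_{e}\dim_{k}(S^{G}_{\sign})_{e}\,t^{-(e+n)} = \Hilb(H_{\mathfrak m}^{n}(S)^{G})(t)$; this is the graded vector-space isomorphism of part (1) — with the shift by $c$ in characteristic two — and field-independence of the Hilbert function of the top local cohomology of $S^{G}$, while comparing the two descriptions of this series as a rational function shows $a(S^{G}) = \deg\Hilb(S^{G}) = -(n+e_{0})$ with $e_{0}$ the least degree of a $\sign$-semi-invariant, which is part (3). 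Since $G/N$ acts on the polynomial ring $S^{N}$ without pseudo-reflections, $\operatorname{Cl}(S^{G})\cong\widehat{G/N}$ with $[S^{G}_{\chi}]$ corresponding to $\chi$, so $\omega_{S^{G}}\cong S^{G}_{\sign}(-n)$ is free precisely when $\epsilon\cdot\sign$ is trivial, equivalently $e_{0}=c$, equivalently $a(S^{G}) = -(c+n)$ (checkable by the Molien formula) — this is part (4), with the characteristic-zero equivalence coming from field-independence. For part (5): since $1$ lies in the $S^{N}$-module basis of $S$ given by products of staircase block monomials, $S^{N}\into S$ always splits $S^{N}$-linearly, so $S^{G}\into S$ splits iff $S^{G}=(S^{N})^{G/N}\into S^{N}$ splits; when $\chr k \nmid \md{G/N}$ the Reynolds operator of $G/N$ on $S^{N}$ provides the splitting, and the converse — that a finite group acting on a polynomial ring without pseudo-reflections and with order divisible by $\chr k$ admits no split invariant inclusion, reduced via restriction to a central $\mathbb{Z}/p$ inside a Sylow $p$-subgroup — is the Shank--Wehlau direction.

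The main obstacle I expect is the canonical-module computation: checking that $S^{G}\subseteq S^{N}$ is unramified in codimension one (the delicate point being the wild ramification at $\chr k = 2$, where one must verify that the inertia groups and the local ring extensions still coincide) and then transporting the $G/N$-equivariant structure through the Jacobian isomorphism to pin down the twisting character as $\epsilon\cdot\sign$ — this is exactly where the characteristic dichotomy and the shift by $c$ enter. The ``only if'' half of (5) is the other delicate input.
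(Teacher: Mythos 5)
For parts (1)--(4) your route is essentially correct, and it differs from the paper in one genuine way: where the paper simply cites Broer's formula $\omega_{S^{G}} \cong S^{G}_{\sign/\chi}\theta(-n)$ \Cite[Corollary 7]{Broer:DirectSummandProperty} and then identifies $\theta = \prod_{(i,j) \in T}(x_{i}-x_{j})$, you re-derive that special case through the tower $S^{G} \subset S^{N} \subset S$: $S^{N}$ is polynomial with $a(S^{N}) = -(c+n)$, the extension $S^{G} \subset S^{N}$ is unramified in codimension one, hence $\omega_{S^{G}} \cong (\omega_{S^{N}})^{G/N}$, and the twisting character is read off from the Jacobian $\Delta$. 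Your character $\epsilon\cdot\sign$ is exactly the paper's $\sign/\chi$, and your divisibility step (every $\sign$-semi-invariant is divisible by $\Delta$) is the paper's \Cref{lem:transposition-flips-sign}. The codimension-one claim is true, including in characteristic two: a nontrivial inertia element at a height-one prime of $S$ must be a pseudoreflection, hence a transposition lying in $N$, and the inertia group of the intermediate extension is the image of the inertia group in $G/N$ (valid also for wild ramification); combined with a reflexive-module comparison over the \'etale locus this yields the descent of canonical modules -- but be aware this is real machinery that you must set up in the graded modular setting, and it amounts to reproving the case of Broer's result that the paper just quotes. Your treatment of (1) via the inverse-monomial/\Cech\ basis with the $\sign$ twist is the same computation as the paper's equivariant duality $(S\otimes\det)^{\ast} \cong H^{n}_{\mathfrak m}(S)(-n)$ plus the count of $\sign$-good orbits, and your rational-function derivation of $a(S^{G}) = \deg\Hilb(S^{G})$ is a legitimate alternative to the paper's reduction to the Cohen--Macaulay characteristic-zero case. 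For (4), in place of Broer's criterion you invoke Samuel--Nakajima descent of class groups along $S^{G} \subset S^{N}$; that works (you only need injectivity of $\widehat{G/N} \to \operatorname{Cl}(S^{G})$, which uses the same unramifiedness), but again it is modular-case input to be cited or proved, not the nonmodular statement.

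The genuine gap is the ``only if'' half of (5). Your sketch --- restrict to a central copy of $\mathbb{Z}/p$ inside a Sylow $p$-subgroup of $G/N$ --- does not work as stated, because the direct summand property does not pass to invariant rings of subgroups: in characteristic $3$, $S^{\sym_{3}} \into S$ splits while $S^{\alt_{3}} \into S$ does not, so there is no general mechanism turning a splitting for the big group into one for a subgroup; the transfer argument runs in the opposite direction (it exhibits $P^{H}$ as a summand of $P^{Z}$ when $[H:Z]$ is invertible, not $P^{Z}$ as a summand of $P$). What is actually needed is the trace/different argument: since $S^{G} \subset S^{N}$ is unramified in codimension one and $S^{N}$ is a polynomial ring, $\Hom_{S^{G}}(S^{N}, S^{G})$ is generated as an $S^{N}$-module by the trace of $G/N$, so the inclusion splits if and only if the trace is surjective; in degree zero the trace is multiplication by $\md{G/N}$, which vanishes when $\chr(k)$ divides $\md{G/N}$. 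This is precisely the content of \Cite[Theorem 2]{Broer:DirectSummandProperty}, which the paper cites in \Cref{thm:characterise-splitting}; you should either cite it as well or supply that generator-of-$\Hom$ argument in place of the Sylow reduction.
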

	The above statements are proven as
	\begin{enumerate*}[label=(\arabic*)]
		\item \Cref{cor:hilbs-invariants-cohomologies};
		\item \Cref{cor:canonical-permutation};
		\item \Cref{cor:a-invariant}, which also describes the $a$-invariant in the odd characteristic;
		\item \Cref{thm:quasi-Gorenstein-independent} and \Cref{cor:quasi-Gorenstein-characterisation};
		\item \Cref{thm:characterise-splitting};
		\item \Cref{cor:diff-opps-lift}. 
	\end{enumerate*}

	Results (1)--(3) above are motivated by the work of Goel, Jeffries, and Singh \cite{GoelJeffriesSingh} 
	where they characterise when the equality $a(S^{G}) = a(S)$ holds
	and show the following [ibid. Corollary 5.1]: 
	if $G$ is a cyclic subgroup of $\GL_{n}(k)$ without transvections acting on $S = k[x_{1}, \ldots, x_{n}]$, 
	then 
	$H_{\mathfrak{m}}^{n}(S)^{G} \simeq H_{\mathfrak{n}}^{n}(S^{G})$ 
	as graded $k$-vector spaces. 
	While they show that the ``cyclic'' hypothesis cannot be dropped in general,
	our result (1) above shows that it may be dropped for permutation groups. 
	We note that permutation groups contain transvections only in characteristic two, in which case these are precisely the transpositions, accounting for the shift. 

	As a consequence of (5) above, 
	we prove the Shank--Wehlau conjecture \cite[Conjecture 1.1]{ShankWehlau:ModularTransfer} for permutation groups; 
	the general conjecture may be stated as follows: 
	Suppose $G \le \GL_{n}(k)$ is a finite $p$-group acting linearly on the ring $S \coloneqq k[x_{1}, \ldots, x_{n}]$, where $\chr(k) = p$. 
	Then, $S^{G} \into S$ splits if and only if $S^{G}$ is a polynomial ring. 
	This formulation is due to Broer \cite[Corollary 4]{Broer:DirectSummandProperty}, 
	who then proved the conjecture for abelian $p$-groups \cite{Broer:AbelianTransvection}. 
	Since then, this has been extended for some classes of $p$-groups by Elmer and Sezer \cite{ElmerSezer}, and by Kummini and Mondal \cite{KumminiMondal:Nakajima,KumminiMondal:PolynomialInvariantRings,KumminiMondal:ModularRamification}.

	Result (6) above addresses the following question of Smith and Van den Bergh, suitably reformulated: \newline
	\cite[Question 5.1.2]{SmithVanDenBergh}
	Let $A$ be a domain that is a finitely generated $\mathbb{Z}$-algebra. 
	Suppose $R$ is a finitely generated $A$-algebra such that 
	$R \otimes_{A} \Frac(A)$ is the ring of invariants for a linear action of a reductive group on a polynomial ring of characteristic zero. 
	Does there exist a nonempty open subset $U$ of $\Spec A$ 
	such that for each maximal ideal $\mathfrak{m} \in U$, 
	each differential operator in 
	$\DD{R/\mathfrak{m} R}{A/\mathfrak{m} A}$ lifts to
	$\DD{R}{A}$?

	The answer is negative in general: 
	for several classical invariant rings, Jeffries and Singh \cite{JeffriesSingh:Lift} construct differential operators in positive characteristic that do not lift to characteristic zero. 
	Our result (6) above says that the answer is positive for permutation actions, i.e., differential operators \emph{do} lift. 

\section*{Acknowledgments}
	The author thanks Jack Jeffries and Anurag K. Singh for several interesting discussions. 

\section{Preliminaries}
	
	We collect some definitions and facts that will be used later. 
	Throughout, 
	we use $k$ to denote a field, 
	$n$ a positive integer, 
	$S \coloneqq k[x_{1}, \ldots, x_{n}]$ the polynomial ring with its standard grading, 
	and
	$\GL \coloneqq \GL_{n}(k)$ the general linear group with its natural degree-preserving action on $S$. 
	The homogeneous maximal ideal of $S$ is denoted by $\mathfrak{m}$, 
	and $H_{\mathfrak{m}}^{n}(S)$ is the top local cohomology of $S$ supported at $\mathfrak{m}$. 
	Given a finite subgroup $G$ of $\GL$, 
	the natural action of $\GL$ on $S$ restricts to one of $G$. 

	\subsection{Graded modules}
		
		Given a graded module $M$ and an integer $d$, we denote the $d$-th graded component of $M$ by $[M]_{d}$. 
		By $M(i)$ we shall mean the graded module given by $[M(i)]_{d} \coloneqq [M]_{i + d}$. 
		All graded modules that we consider will be component-wise finite-dimensional, i.e., 
		$\rank_{k} [M]_{d} < \infty$ for all $d \in \mathbb{Z}$. 
		The \deff{Hilbert function} of $M$ is the function 
		$\mathbb{Z} \to \mathbb{N}$ given by $d \mapsto \rank_{k} [M]_{d}$. 
		If $M$ and $N$ are graded modules with the same Hilbert function, then they are isomorphic as graded $k$-vector spaces, and we write this as $M \simeq N$.
		If $[M]_{d} = 0$ for $d < 0$, then the \deff{Hilbert series} of $M$ is defined to be the power series
		\begin{equation*} 
			\Hilb(M, t) \coloneqq \sum_{d \ge 0} \rank_{k}([M]_{d}) t^{d} \in \mathbb{Q}[\![t]\!].
		\end{equation*}
		The Hilbert--Serre theorem asserts that the above power series is a rational function when $M$ is a finitely generated module. 
		Writing $\Hilb(M) = f/g$ for polynomials $f$ and $g$, we define the \deff{degree} $\deg \Hilb(M)$ to be the difference 
		$\deg(f) - \deg(g)$.

	\subsection{The twisted group algebra}
		
		The \deff{twisted group algebra} $S \ast G$ is the graded $k$-algebra defined as follows: 
		letting $k G$ denote the group algebra,
		we have 
		$S \ast G \coloneqq S \otimes_{k} k G$
		as a $k$-vector space, 
		with the multiplication rule
		\begin{equation*} 
			(s \otimes \sigma) \cdot (t \otimes \tau) \coloneqq 
			s \sigma(t) \otimes \sigma \tau.
		\end{equation*}

		The above inherits a natural grading from that on $S$ and $k G$, where the latter lives in degree zero. 
		Concretely, if $s \in [S]_{d}$ and $\sigma \in G$, 
		then the simple tensor $s \otimes \sigma$ is homogeneous of degree $d$. 

		Here is an alternative way to think about $S \ast G$-modules: 
		the data of an $S \ast G$-module $M$ is precisely the data of an $S$\nobreakdash-module $M$ equipped with a $k$-linear action of $G$ satisfying 
		$\sigma(s m) = \sigma(s) \sigma(m)$ for all 
		$(\sigma, s, m) \in G \times S \times M$. 
		Moreover, $M$ is graded as an $S \ast G$-module if it is so as an $S$-module and the $G$-action preserves the grading. 
		In particular, $[M]_{d}$ is then a $k G$\nobreakdash-module for each $d \in \mathbb{Z}$. 
		Similarly, 
		a function $f \colon M \to N$ between $S \ast G$-modules is $S \ast G$-linear 
		precisely if it is $S$-linear and $G$-equivariant; 
		the latter means that we have 
		$f(\sigma(m)) = \sigma(f(m))$ for all $(\sigma, m) \in G \times M$.

		\subsection{Twisted representation and semi-invariants}
		
		Given a character, i.e., a homomorphism $\chi \colon G \to k^{\times}$, 
		and a graded $S \ast G$-module $M$, 
		we define $M \otimes \chi$ to be the graded $S \ast G$-module $M \otimes_{k} k$ with the actions 
		\begin{equation*} 
			s \cdot (m \otimes 1) \coloneqq s m \otimes 1
			\andd 
			\sigma \cdot (m \otimes 1) \coloneqq \sigma(m) \otimes \chi(\sigma)
		\end{equation*}
		for $(\sigma, s, m) \in G \times S \times M$. 
		In particular, $M$ and $M \otimes \chi$ are isomorphic as graded $S$-modules. 

		The module of \deff{$\chi$-semi-invariants} is defined as
		\begin{equation*} 
			S^{G}_{\chi} \coloneqq 
			\{s \in S : \sigma(s) = \chi(\sigma) s \text{ for all } \sigma \in G\},
		\end{equation*}
		which is isomorphic to $(S \otimes \chi^{-1})^{G}$ as a graded $S^{G}$-module. 

		\subsection{Duals}
		
		Given a finite $k G$-module $V$, we define its dual $V^{\vee}$ to be the $k$-vector space $\Hom_{k}(V, k)$ with the action of $G$ given by
		\begin{equation*} 
			(\sigma \cdot f)(v) \coloneqq f(\sigma^{-1} v)
		\end{equation*}
		for $(\sigma, v, f) \in G \times V \times V^{\vee}$. 
		This makes $V^{\vee}$ a $k G $-module.

		Next, given a graded $S \ast G$-module $M$, we define its graded dual $M^{\ast}$ to be the following graded $S \ast G$-module:
		the degree $d$ component is the $k G$-module $([M]_{-d})^{\vee} = \Hom_{k}([M]_{-d}, k)$, 
		and given homogeneous elements $s \in [S]_{e}$ and $f \in [M^{\ast}]_{d}$, we define $(s \cdot f) \in [M^{\ast}]_{d + e}$ via
		\begin{flalign*} 
			\phantom{\text{for $m \in [M]_{-d-e}$. }}
			&& (s \cdot f)(m) &\coloneqq f(s \cdot m), 
			&
			\text{for $m \in [M]_{-d-e}$. }
		\end{flalign*}

		Both the duals above have an obvious definition on maps, yielding contravariant endofunctors on the categories of $k G$\nobreakdash-modules and graded $S \ast G$-modules, respectively. 

		We shall also use $(-)^{\ast}$ as an endofunctor on the category of graded $S$-modules and $S^{G}$-modules, with the analogous definitions. 

	\subsection{Pseudoreflections}
		
		An element $\sigma \in \GL$ is a \deff{pseudoreflection} if $\rank(1 - \sigma) = 1$, 
		in which case, $\ker(1 - \sigma)$ is a hyperplane. 
		A non-diagonalisable pseudoreflection is a \deff{transvection}. 
		The element $\smatrix{0 & 1 \\ 1 & 0}$ is a pseudoreflection in every characteristic, 
		but a transvection precisely in characteristic two. 
		More generally, 
		viewing $\sym_{n}$ as a subgroup of $\GL_{n}(k)$ via permutation matrices, 
		we see that the pseudoreflections are exactly the transpositions, 
		and that these are transvections precisely when $\chr(k) = 2$. 

	\subsection{Local cohomology}
		
		In this subsection, we record how the group $\GL$ acts on the top local cohomology module $H_{\mathfrak{m}}^{n}(S)$. 
		Following Kunz \cite[\S4]{Kunz:ResiduesDuality}, we use the notion of generalised fractions to represent the elements of local cohomology. 
		A generalised fraction is an element of the form
		\begin{equation*} 
			\gfrac{f}{t_{1}, \ldots, t_{n}},
		\end{equation*}
		where $f \in S$, 
		and $t_{1}, \ldots, t_{n}$ is a homogeneous system of parameters for $S$. 
		The above fraction represents (the cohomology class of) the element 
		$\frac{f}{t_{1} \cdots t_{n}}$ in the \Cech\ complex $\cC^{\bullet}(t_{1}, \ldots, t_{n})$. 
		If $t_{1}', \ldots, t_{n}'$ is another such system of parameters contained in $(t_{1}, \ldots, t_{n}) S$, 
		and $A$ is an $n \times n$ matrix over $S$ satisfying
		\begin{equation} \tag{TL} \label{eq:transformation-law}
			\begin{bmatrix}
				t_{1}' \\ 
				\vdots \\
				t_{n}' \\
			\end{bmatrix}
			=
			A
			\begin{bmatrix}
				t_{1} \\ 
				\vdots \\
				t_{n} \\
			\end{bmatrix},
			\quad
			\text{then}
			\quad
			\gfrac{f}{t_{1}, \ldots, t_{n}} = 
			\gfrac{\det(A) f}{t_{1}', \ldots, t_{n}'},
		\end{equation}
		by \cite[Theorem 4.18]{Kunz:ResiduesDuality}. 
		Using this, any two generalised fractions can be modified to have a common denominator. 
		We remark that the transformation rule above, in particular, implies that the order of the elements in the denominator is relevant: 
		swapping two elements in the denominator changes the fraction by a sign. 

		The action of $\GL$ on $H_{\mathfrak{m}}^{n}(S)$ is given in the natural way: 
		given $\sigma \in \GL$ and $t \in S$, we first obtain an $S^{G}$-linear map $\sigma$ on the localisations $ S_{t} \to S_{\sigma(t)}$ given by $f/t^{N} \mapsto \sigma(f)/\sigma(t)^{N}$. 
		Extending this, 
		we obtain an $S^{G}$-linear map of \Cech\ complexes $\sigma \colon \cC(t_{1}, \ldots, t_{n}) \to \cC(\sigma(t_{1}), \ldots, \sigma(t_{n}))$, 
		giving us
		\begin{equation*} 
			\sigma \cdot 
			\gfrac{f}{t_{1}, \ldots, t_{n}}
			=
			\gfrac{\sigma(f)}{\sigma(t_{1}), \ldots, \sigma(t_{n})}
		\end{equation*}
		for $(\sigma, f) \in G \times S$, 
		and $t_{1}, \ldots, t_{n}$ any homogeneous system of parameters. 

		\begin{thm}[Graded duality]
			There is a $\GL$-equivariant isomorphism of graded $S$-modules 
			\begin{equation*} 
				(S \otimes \det)^{\ast} \cong H_{\mathfrak{m}}^{n}(S)(-n),
			\end{equation*}
			where $\det = \det_{[S]_{1}}$.
		\end{thm}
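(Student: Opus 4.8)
The plan is to realise the isomorphism via an explicit residue pairing, transporting the $\GL$-action through the transformation law \eqref{eq:transformation-law}. First I would recall the monomial description of $H_{\mathfrak{m}}^{n}(S)$: writing it as the cokernel of $\bigoplus_{i} S_{x_{1}\cdots \widehat{x_{i}}\cdots x_{n}}\to S_{x_{1}\cdots x_{n}}$, the classes of the Laurent monomials $x_{1}^{-a_{1}}\cdots x_{n}^{-a_{n}}$ with all $a_{i}\ge 1$ form a homogeneous $k$-basis, while every Laurent monomial with some nonnegative exponent is zero; in particular the lowest nonzero degree is $-n$, where $\gfrac{1}{x_{1},\ldots,x_{n}}$ spans $[H_{\mathfrak{m}}^{n}(S)]_{-n}$. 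I would then introduce the $k$-linear residue map $\rho\colon H_{\mathfrak{m}}^{n}(S)\to k$ that reads off the coefficient of this socle element (so $\rho$ vanishes outside degree $-n$), and the $S$-bilinear pairing $\langle f,\eta\rangle\coloneqq\rho(f\eta)$ on $S\times H_{\mathfrak{m}}^{n}(S)$. Expanding the $S$-module product in the monomial basis shows that $\langle x^{c},x^{-a}\rangle$ is $1$ when $a=c+(1,\ldots,1)$ and $0$ otherwise; hence in each internal degree the pairing $[S]_{e}\times[H_{\mathfrak{m}}^{n}(S)]_{-n-e}\to k$ is perfect, and it identifies $[H_{\mathfrak{m}}^{n}(S)]_{d-n}$ with $\Hom_{k}([S]_{-d},k)$.

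Next I would pin down the $\GL$-action on the socle. Writing $\sigma(x_{i})=\sum_{j}A_{ij}x_{j}$ for $\sigma\in\GL$, the transformation law \eqref{eq:transformation-law} applied to the change of homogeneous system of parameters from $x_{1},\ldots,x_{n}$ to $\sigma(x_{1}),\ldots,\sigma(x_{n})$ gives $\sigma\cdot\gfrac{1}{x_{1},\ldots,x_{n}}=\det(A)^{-1}\gfrac{1}{x_{1},\ldots,x_{n}}$, where $\det(A)=\det(\sigma)$ for $\det=\det_{[S]_{1}}$. Therefore $\rho(\sigma\xi)=\det(\sigma)^{-1}\rho(\xi)$ for every $\xi$, and since the $S$-module multiplication on $H_{\mathfrak{m}}^{n}(S)$ is $\GL$-equivariant, this yields $\langle\sigma f,\sigma\eta\rangle=\det(\sigma)^{-1}\langle f,\eta\rangle$; equivalently, the induced pairing $(S\otimes\det)\times H_{\mathfrak{m}}^{n}(S)\to k$ is $\GL$-invariant.

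Finally I would assemble the statement: a pairing that is perfect in each degree, $S$-bilinear, and $\GL$-invariant induces a degree-preserving, $S$-linear, $\GL$-equivariant isomorphism $H_{\mathfrak{m}}^{n}(S)(-n)\to(S\otimes\det)^{\ast}$, $\eta\mapsto\langle-,\eta\rangle$, which is exactly the claim; the grading matches by construction, as one sees from the fact that $[H_{\mathfrak{m}}^{n}(S)(-n)]_{0}=[H_{\mathfrak{m}}^{n}(S)]_{-n}$ and $[(S\otimes\det)^{\ast}]_{0}$ are both isomorphic to $\det^{-1}$ as $\GL$-modules. The one genuine subtlety is the bookkeeping: tracking the direction of the $\det$-twist, the distinction between a graded module and its graded dual, and the scalar $\det(A)$ produced by \eqref{eq:transformation-law}. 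As a sanity check, one may instead invoke graded local duality in the form $H_{\mathfrak{m}}^{n}(S)^{\ast}\cong\omega_{S}$ together with the $\GL$-equivariant identification $\omega_{S}=\textstyle\bigwedge^{n}\Omega_{S/k}\cong(S\otimes\det)(-n)$, valid because $\sigma$ scales $dx_{1}\wedge\cdots\wedge dx_{n}$ by $\det(\sigma)$; this recovers the same twist.
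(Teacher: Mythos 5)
Your proposal is correct, and it proves the statement by a genuinely more explicit route than the paper. The paper first cites graded duality to get \emph{some} graded $S$-isomorphism $\Phi \colon H_{\mathfrak{m}}^{n}(S)(-n) \to (S \otimes \det)^{\ast}$, and then shows that such an isomorphism is automatically $\GL$-equivariant: the transformation law \eqref{eq:transformation-law} forces both sides to transform by $\det(\sigma)^{-1}$ in degree zero, and equivariance propagates to all degrees by multiplying into the socle and using nondegeneracy of the pairing $[S]_{d} \times [D]_{-d} \to [D]_{0}$. You instead build the isomorphism by hand: the Laurent-monomial basis of the \Cech\ description gives the residue functional $\rho$, the pairing $\langle f,\eta\rangle = \rho(f\eta)$ is checked to be perfect degreewise by the monomial computation, and the same transformation-law calculation shows $\rho(\sigma\xi)=\det(\sigma)^{-1}\rho(\xi)$, hence the twisted invariance $\langle \sigma f,\sigma\eta\rangle = \det(\sigma)^{-1}\langle f,\eta\rangle$, which unwinds (given the paper's convention $(\sigma\cdot\varphi)(v)=\varphi(\sigma^{-1}v)$ on the twisted dual) to exactly the equivariance of $\eta\mapsto\langle-,\eta\rangle$. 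The common kernel of both arguments is the $\det(\sigma)^{-1}$ scaling of the socle class $\gfrac{1}{x_{1},\ldots,x_{n}}$; what your version buys is self-containedness (you do not need to import $S^{\ast}\cong H_{\mathfrak{m}}^{n}(S)(-n)$, since perfectness of the residue pairing is verified directly), while the paper's version is shorter given the cited duality and isolates the useful observation that \emph{any} graded $S$-isomorphism between these modules is automatically equivariant. One small caveat: your closing ``sanity check'' via $H_{\mathfrak{m}}^{n}(S)^{\ast}\cong\omega_{S}\cong(S\otimes\det)(-n)$ is essentially a restatement of the theorem (the equivariance of that duality is the point at issue), so it should be regarded only as a consistency check, as you indicate, not as an independent argument; the main proof does not rely on it and is complete.
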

		We clarify the above notation regarding the character $\det$, as it may differ in the literature: 
		often, $S$ arises as the symmetric algebra on the dual space $V^{\ast}$, in which case, we have
		$\det_{[S]_{1}} = \det_{V}^{-1}$. 
		To be explicit, if $\sigma \in \GL$ acts on $S$ via 
		$x_{i} \mapsto \alpha_{i} x_{i}$, 
		then we have $\det_{[S]_{1}}(\sigma) = \alpha_{1} \cdots \alpha_{n}$.

		For the purposes of this paper, we only use that the above isomorphism is $\sym_{n}$-equivariant, for which the result follows directly from the transformation law \Cref{eq:transformation-law}. 
		We expect that the above is likely well-known to experts but failing to find a reference in the literature, we provide a proof below. 

		\begin{proof} 
			Graded duality \cite[Theorem 18.6]{24Hours} tells us that $S^{\ast} \cong H_{\mathfrak{m}}^{n}(S)(-n)$ as graded $S$-modules; 
			hence, there is a graded $S$-isomorphism
			\begin{equation*} 
				\Phi \colon H_{\mathfrak{m}}^{n}(S)(-n) \to (S \otimes \det)^{\ast}.
			\end{equation*}
			Setting $H \coloneqq H_{\mathfrak{m}}^{n}(S)(-n)$ 
			and $D \coloneqq (S \otimes \det)^{\ast}$, 
			we wish to show that $\Phi(\sigma(\eta)) = \sigma(\Phi(\eta))$ for all $\sigma \in \GL$ and all homogeneous $\eta \in H$. 
			If $\eta$ is a socle element, i.e., of degree zero, 
			then this is clear in view of the transformation law \Cref{eq:transformation-law} because the action of $\sigma$ on both ${[H]}_{0}$ and ${[D]}_{0}$ is scaling by $\det(\sigma^{-1})$. 
			Next, fix $d > 0$, $\eta \in [H]_{-d}$, and $\sigma \in \GL$. 
			Then, given any $s \in [S]_{d}$, 
			the element $s \eta$ is in the socle, and thus, 
			\begin{equation*} 
				\Phi(\sigma(s \eta)) = \sigma(\Phi(s \eta)).
			\end{equation*}
			Using the $S$-linearity of $\Phi$ and the fact that $H$ and $D$ are $S \ast \GL$-modules, we rewrite both sides of the above equation to obtain
			\begin{align*} 
				\Phi(\sigma(s \eta)) &= \sigma(\Phi(s \eta)) \\
				\Rightarrow 
				\Phi(\sigma(s) \sigma(\eta)) &= \sigma(s \Phi(\eta)) \\
				\Rightarrow
				\sigma(s) \Phi(\sigma(\eta)) &= \sigma(s) \sigma(\Phi(\eta)) \\
				\Rightarrow 
				\sigma(s) [\Phi(\sigma(\eta)) &- \sigma(\Phi(\eta))] = 0
			\end{align*}
			for all $s \in [S]_{d}$. 
			Thus, the element $\delta \coloneqq [\Phi(\sigma(\eta)) - \sigma(\Phi(\eta))] \in [D]_{-d}$ is annihilated by $[S]_{d}$. 
			Because the pairing 
			$[S]_{d} \times [D]_{-d} \to [D]_{0}$ is nondegenerate,
			we get $\delta = 0$, giving us the desired $\GL$-equivariance. 
		\end{proof}

		\begin{cor} \label{cor:invariants-of-H-general}
			As graded $S^{G}$-modules, one has
			\begin{equation*} 
				\pushQED{\qed} 
				{\left((S \otimes \det)^{\ast}\right)}^{G}
				\cong
				H_{\mathfrak{m}}^{n}(S)^{G}(-n).
				\qedhere
				\popQED
			\end{equation*}
		\end{cor}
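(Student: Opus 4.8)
The plan is to obtain the statement by simply applying the functor of taking $G$-invariants to the isomorphism furnished by the preceding graded duality theorem. Since $G$ is a subgroup of $\GL = \GL_n(k)$, the $\GL$-equivariant graded $S$-isomorphism
\begin{equation*}
	(S \otimes \det)^{\ast} \cong H_{\mathfrak{m}}^{n}(S)(-n)
\end{equation*}
is in particular $G$-equivariant, i.e., an isomorphism of graded $S \ast G$-modules.

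First I would recall that $M \mapsto M^{G}$, which sends a graded $S \ast G$-module $M$ to the degree-wise $G$-fixed subspace (an $S^{G}$-submodule of $M$), is a functor from graded $S \ast G$-modules to graded $S^{G}$-modules. Applying it to the displayed isomorphism therefore yields an isomorphism of graded $S^{G}$-modules
\begin{equation*}
	\bigl((S \otimes \det)^{\ast}\bigr)^{G} \cong \bigl(H_{\mathfrak{m}}^{n}(S)(-n)\bigr)^{G}.
\end{equation*}
Next I would observe that, because the $G$-action on $H_{\mathfrak{m}}^{n}(S)$ preserves the grading, the shift functor commutes with taking invariants: directly from $[M(-n)]_{d} = [M]_{d-n}$ and the fact that $G$ acts on each graded piece $[M]_{d-n}$, one gets $\bigl(H_{\mathfrak{m}}^{n}(S)(-n)\bigr)^{G} = H_{\mathfrak{m}}^{n}(S)^{G}(-n)$ as graded $S^{G}$-modules. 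Combining the two identifications gives the claim.

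There is no real obstacle here; the only point worth spelling out is the commutation of the degree shift with the invariants functor, which is immediate from the definitions. I would keep the argument to these few lines rather than reproving graded duality, which is already established above.
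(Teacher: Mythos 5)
Your proposal is correct and is exactly the argument the paper intends: the corollary is stated as an immediate consequence of the graded duality theorem, obtained by applying the $G$-invariants functor to the $\GL$-equivariant (hence $G$-equivariant) isomorphism, with the routine observation that the degree shift commutes with taking invariants.
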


\section{Twisted permutation representations} \label{sec:twisted-permutation-representation}
	
	Let $\chi \colon G \to k^{\times}$ be a homomorphism, and
	$V$ a finite $k G$-module. 
	We say that $V$ is a \deff{$\chi$-permutation representation} if there exists a $k$-basis $B = \{e_{1}, \ldots, e_{n}\}$ of $V$ and an action of $G$ on $[n] \coloneqq \{1, \ldots, n\}$ such that the action of $G$ on $V$ is given by
	\begin{equation*} 
		\sigma(e_{i}) = \chi(\sigma) e_{\sigma(i)}
	\end{equation*}
	for $\sigma \in G$; 
	such a basis $B$ is a \deff{$\chi$-basis}. 
	For the remainder of this section, we fix $V$ and $B$ as above. 

	An element $i \in [n]$ is \deff{$\chi$-good} if the stabiliser of $i$ is contained in the kernel of $\chi$, 
	i.e., 
	$\Stab_{G}(i) \subset \ker(\chi)$. 
	The orbit $G \cdot i \subset [n]$ is \deff{$\chi$-good} if $i$ is $\chi$-good; this is independent of the orbit representative. 
	Fix orbit representatives $a_{1}, \ldots, a_{r} \in [n]$ such that we have a disjoint orbit decomposition
	\begin{equation*} 
		[n] = G \cdot a_{1} \sqcup \cdots \sqcup G \cdot a_{r}.
	\end{equation*}
	Let $s \le r$ be the number of $\chi$-good orbits, and assume without loss of generality that $a_{1}, \ldots, a_{s}$ are $\chi$-good. 
	For each such $i \in [s]$, we define the \emph{twisted orbit sum}
	\begin{equation*} 
		X(a_{i}) \coloneqq \sum_{\sigma \in G/\Stab_{G}(a_{i})} \chi(\sigma) e_{\sigma(a_{i})} \in V,
	\end{equation*}
	where the above is well-defined (independent of the choice of coset representatives) because $a_{i}$ is $\chi$-good.
	A routine verification shows that $X(a_{i}) \in V^{G}$ for each $i \in [s]$. 
	As the distinct $X(a_{i})$ involve disjoint sets of basis vectors, we see that the set $C \coloneqq \{X(a_{i}) : i \in [s]\}$ is $k$-linearly independent. 
	If $f = \sum_{j = 1}^{n} \alpha_{j} e_{j} \in V^{G}$, 
	then for every $\sigma \in G$ and $j \in [n]$, 
	we must have $\alpha_{\sigma(j)} = \alpha_{j} \chi(\sigma)$. 
	In particular, if $j$ is not $\chi$-good, then choosing $\sigma \in \Stab_{G}(j) \setminus \ker(\chi)$ shows that $\alpha_{j} = 0$. 
	Thus, the condition gives us
	\begin{equation*} 
		f = \sum_{i = 1}^{s} \alpha_{a_{i}} X(a_{i}).
	\end{equation*}
	In other words, $C$ is a $k$-basis for $V^{G}$, giving us:

	\begin{prop} \label{prop:dimension-good-orbits}
		The dimension of $V^{G}$ is the number of $\chi$-good orbits.
		\qed
	\end{prop}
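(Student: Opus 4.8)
The plan is to exhibit an explicit $k$-basis of $V^{G}$ whose cardinality is the number of $\chi$-good orbits, namely the twisted orbit sums $X(a_{1}), \ldots, X(a_{s})$ already introduced; the dimension count then falls out immediately.

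First I would show these elements span $V^{G}$. Take an arbitrary invariant $f = \sum_{j=1}^{n} \alpha_{j} e_{j} \in V^{G}$ and unwind the condition $\sigma(f) = f$ using $\sigma(e_{i}) = \chi(\sigma) e_{\sigma(i)}$: comparing the coefficient of $e_{\sigma(j)}$ on both sides yields the single relation $\alpha_{\sigma(j)} = \chi(\sigma)\,\alpha_{j}$ for all $\sigma \in G$ and $j \in [n]$. Specialising to $\sigma \in \Stab_{G}(j)$ forces $\alpha_{j} = 0$ unless $j$ is $\chi$-good, while on each $\chi$-good orbit the relation determines every coefficient from the one at the chosen representative $a_{i}$; reading this off gives $f = \sum_{i=1}^{s} \alpha_{a_{i}} X(a_{i})$.

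Next I would confirm the $X(a_{i})$ genuinely lie in $V^{G}$ and are linearly independent. Well-definedness of $X(a_{i})$ — independence of the choice of coset representatives for $G/\Stab_{G}(a_{i})$ — is exactly where the hypothesis $\Stab_{G}(a_{i}) \subset \ker(\chi)$ is needed, and a direct computation with $\sigma(e_{j}) = \chi(\sigma) e_{\sigma(j)}$ and the homomorphism property of $\chi$ gives $\tau(X(a_{i})) = X(a_{i})$ for every $\tau \in G$. Linear independence comes for free: distinct orbits are disjoint subsets of $[n]$, so the $X(a_{i})$ are supported on pairwise disjoint sets of basis vectors. Hence $C = \{X(a_{i}) : i \in [s]\}$ is a $k$-basis of $V^{G}$, and $\dim_{k} V^{G} = s$, the number of $\chi$-good orbits.

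The only step requiring any care is the bookkeeping of the $\chi$-factors in the invariance and well-definedness of $X(a_{i})$, but this is mechanical once $\chi$-goodness is assumed, so I do not anticipate a genuine obstacle.
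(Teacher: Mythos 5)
Your proposal is correct and follows essentially the same route as the paper, which proves the proposition in the discussion immediately preceding it: the twisted orbit sums $X(a_{i})$ are well defined and invariant because $\Stab_{G}(a_{i}) \subset \ker(\chi)$, they are linearly independent since their supports are disjoint, and the relation $\alpha_{\sigma(j)} = \chi(\sigma)\alpha_{j}$ kills the coefficients on non-$\chi$-good indices and expresses any invariant as a combination of the $X(a_{i})$. Your bookkeeping of the $\chi$-factors matches the paper's, so there is nothing to add.
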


	Next, we analyse the dual module $V^{\vee}$. 
	Let $\{e_{1}^{\ast}, \ldots, e_{n}^{\ast}\}$ be the dual basis for $V^{\vee}$, 
	determined by
	\begin{equation*} 
		e_{i}^{\ast}(e_{j}) = \delta_{i, j},
	\end{equation*}
	where $\delta$ denotes the Kronecker delta. 
	Then, the function $\sigma \cdot e_{i}^{\ast} \in V^{\vee}$ is given as
	\begin{align*} 
		(\sigma \cdot e_{i}^{\ast})(e_{j}) 
		= e_{i}^{\ast}(\sigma^{-1}(e_{j}))
		&= e_{i}^{\ast}(\chi(\sigma^{-1})(e_{\sigma^{-1}(j)})) \\
		&= \chi(\sigma^{-1}) \delta_{i, \sigma^{-1}(j)}
		= \chi(\sigma^{-1}) \delta_{\sigma(i), j}.
	\end{align*}

	Thus, $\sigma \cdot e_{i}^{\ast} = \chi(\sigma)^{-1} e_{\sigma(i)}^{\ast}$; 
	letting $\chi^{-1} \colon G \to k^{\times}$ denote the multiplicative-inverse gives us:

	\begin{prop} \label{prop:dual-permutation-rep}
		The dual $V^{\vee}$ is a $\chi^{-1}$-permutation representation with 
		$\{e_{1}^{\ast}, \ldots, e_{n}^{\ast}\}$
		being a $\chi^{-1}$-basis. 
		There is a $k$-vector space isomorphism $(V^{G})^{\vee} \cong (V^{\vee})^{G}$.
		If $\chi$ takes values in $\{\pm 1\}$, then the map $e_{i} \mapsto e_{i}^{\ast}$ 
		induces a $k G$-module isomorphism. 
	\end{prop}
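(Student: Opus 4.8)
The plan is to read the first assertion straight off the computation just performed, to get the second by a dimension count using \Cref{prop:dimension-good-orbits}, and to settle the third by verifying one equivariance identity. For the first assertion, the displayed identity $\sigma \cdot e_{i}^{\ast} = \chi(\sigma)^{-1} e_{\sigma(i)}^{\ast}$ says precisely that $\{e_{1}^{\ast}, \ldots, e_{n}^{\ast}\}$ is a $\chi^{-1}$-basis of $V^{\vee}$ with respect to the \emph{same} action of $G$ on $[n]$ that was used for $V$; hence $V^{\vee}$ is a $\chi^{-1}$-permutation representation and nothing further is required.

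For the second assertion I would argue purely by dimension. Applying \Cref{prop:dimension-good-orbits} to $V$ gives that $\dim_{k} V^{G}$ equals the number of $\chi$-good orbits of $G$ on $[n]$, and applying it to the $\chi^{-1}$-permutation representation $V^{\vee}$ gives that $\dim_{k} (V^{\vee})^{G}$ equals the number of $\chi^{-1}$-good orbits. The crucial observation is that $\ker(\chi) = \ker(\chi^{-1})$, since $\chi(\sigma) = 1$ if and only if $\chi(\sigma)^{-1} = 1$; thus an orbit is $\chi$-good exactly when it is $\chi^{-1}$-good, and the two counts coincide. Together with $\dim_{k} (V^{G})^{\vee} = \dim_{k} V^{G}$, this shows all three spaces have the same (finite) dimension and are therefore isomorphic as $k$-vector spaces.

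For the third assertion, assume $\chi$ takes values in $\{\pm 1\}$, so $\chi = \chi^{-1}$, and let $\phi \colon V \to V^{\vee}$ be the $k$-linear extension of $e_{i} \mapsto e_{i}^{\ast}$. This is a $k$-linear isomorphism, as it carries a basis to a basis, so only $G$-equivariance needs checking: for $\sigma \in G$,
\[
\phi(\sigma(e_{i})) = \chi(\sigma)\, e_{\sigma(i)}^{\ast}
\andd
\sigma(\phi(e_{i})) = \sigma(e_{i}^{\ast}) = \chi(\sigma)^{-1} e_{\sigma(i)}^{\ast},
\]
and these agree because $\chi(\sigma) = \chi(\sigma)^{-1}$ when $\chi(\sigma) \in \{\pm 1\}$; extending linearly gives $\phi(\sigma(v)) = \sigma(\phi(v))$ for all $v \in V$, so $\phi$ is a $kG$-module isomorphism. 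There is no genuine obstacle here; the only point that warrants care is resisting the temptation to prove the middle assertion via the natural restriction map $(V^{\vee})^{G} \to (V^{G})^{\vee}$, which need not be an isomorphism in the modular setting --- the dimension count is the clean route.
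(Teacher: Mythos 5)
Your proposal is correct and follows the paper's own route: the paper likewise reads the first claim off the displayed computation of $\sigma \cdot e_{i}^{\ast}$, proves the middle claim by noting that $\chi$-good and $\chi^{-1}$-good coincide and counting good orbits via \Cref{prop:dimension-good-orbits}, and treats the third claim (which you verify explicitly using $\chi = \chi^{-1}$) as immediate. Your added caution about not using the restriction map $(V^{\vee})^{G} \to (V^{G})^{\vee}$ is consistent with the paper's subsequent remark on the modular case.
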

	\begin{proof} 
		Only the second statement needs a proof, for which we note that $i \in [n]$ is $\chi$-good if and only if it is $\chi^{-1}$-good, and thus, the dimensions of $V^{G}$ and $(V^{\vee})^{G}$ are the same. 
	\end{proof}

	\begin{rem}
		In the modular case, one cannot always commute duals and fixed points, as this example shows: Consider the group 
		$G$ generated by 
		$\smatrix{1 & 1 & 0 \\ 0 & 1 & 0 \\ 0 & 0 & 1}$
		and
		$\smatrix{1 & 0 & 1 \\ 0 & 1 & 0 \\ 0 & 0 & 1}$
		with its canonical action on $V \coloneqq k^{3}$, 
		and corresponding dual action on $V^{\vee}$. 
		We have $\dim V^{G} = 1 \neq 2 = \dim (V^{\vee})^{G}$. 
		Indeed, $V^{G}$ is spanned by $e_{1}$ and $(V^{\vee})^{G}$ by $\{e_{2}^{\ast}, e_{3}^{\ast}\}$.
	\end{rem}

\section{Invariants of local cohomology and local cohomology of invariants}

	For the remainder of this section, we fix $G$ to be a subgroup of $\sym_{n}$ with its permutation action on the polynomial ring $S \coloneqq k[x_{1}, \ldots, x_{n}]$.
	In this case, the character $\det$ coincides with the usual character $\sign \colon \sym_{n} \to k^{\times}$. 
	Moreover, 
	in the terminology of \Cref{sec:twisted-permutation-representation}, 
	each graded component $[(S \otimes \det)]_{d} = [(S \otimes \sign)]_{d}$ is a $\sign$-permutation representation of $G$, 
	with the monomials of degree $d$ being a $\sign$-basis. 
	When $\chr(k) = 2$, $\sign$ is trivial, and thus, every monomial is $\sign$-good; 
	in other characteristics, the $\sign$-good monomials are precisely those whose stabiliser is contained in the alternating group $\alt_{n}$. 
	This observation has the following immediate consequences:

	\begin{thm} \label{thm:sign-molien}
		Let $G \le \sym_{n}$ act on $S$ by permutations. 
		If $\chr(k) \neq 2$, then the Hilbert series of $S^{G}_{\sign}$ is given as
		\begin{equation*} 
			\Hilb(S^{G}_{\sign}, t) = 
			\frac{1}{\md{G}} 
			\sum_{\sigma \in G}
			\frac{\sign(\sigma)}{\det(1 - \sigma t)}.
		\end{equation*}
		If $\chr(k) = 2$, then $S^{G}_{\sign} = S^{G}$, and the Hilbert series is given as
		\begin{equation*} 
			\Hilb(S^{G}_{\sign}, t) = 
			\frac{1}{\md{G}} 
			\sum_{\sigma \in G}
			\frac{1}{\det(1 - \sigma t)}.
		\end{equation*}
		The right hand sides of both the equations above are to be interpreted as elements of $\mathbb{Q}(t)$. 
	\end{thm}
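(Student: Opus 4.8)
The plan is to reduce the computation of $\Hilb(S^{G}_{\sign}, t)$ to a degree-by-degree count of orbits via \Cref{prop:dimension-good-orbits}, and then to evaluate that count through a sign-twisted Burnside lemma. Using the isomorphism $S^{G}_{\sign} \cong (S \otimes \sign)^{G}$ of graded $S^{G}$-modules recorded in the preliminaries (valid since $\sign^{-1} = \sign$), it suffices to compute $\dim_{k}[(S \otimes \sign)^{G}]_{d}$ for each $d$. Now $[S \otimes \sign]_{d}$ is a $\sign$-permutation representation of $G$ whose $\sign$-basis is the set $M_{d}$ of degree-$d$ monomials, so \Cref{prop:dimension-good-orbits} gives
\[
	\dim_{k}[S^{G}_{\sign}]_{d} = \#\{\text{$\sign$-good orbits of } G \text{ acting on } M_{d}\}.
\]
When $\chr(k) = 2$, the character $\sign$ is trivial, so $S^{G}_{\sign} = S^{G}$ on the nose and every monomial is $\sign$-good; in all other characteristics, $m \in M_{d}$ is $\sign$-good exactly when $\Stab_{G}(m) \subset \alt_{n}$, as noted before the theorem.

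The crucial step is the identity, for each $d \ge 0$,
\[
	\#\{\text{$\sign$-good orbits in } M_{d}\} = \frac{1}{\md{G}}\sum_{\sigma \in G} \sign(\sigma)\,\md{M_{d}^{\sigma}},
\]
an equality of nonnegative rational numbers, where $M_{d}^{\sigma}$ is the set of degree-$d$ monomials fixed by $\sigma$ and $\sign(\sigma)$ is read as $\pm 1 \in \mathbb{Z}$. I would prove this by exchanging the order of summation: $\sum_{\sigma \in G}\sign(\sigma)\,\md{M_{d}^{\sigma}} = \sum_{m \in M_{d}} \sum_{\sigma \in \Stab_{G}(m)}\sign(\sigma)$, and the inner sum equals $\md{\Stab_{G}(m)}$ when $\Stab_{G}(m) \subset \alt_{n}$ and vanishes otherwise, since in the remaining case $\sign$ restricts to a surjective homomorphism $\Stab_{G}(m) \to \{\pm 1\}$. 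Grouping the surviving terms by orbit and invoking orbit--stabiliser yields the claim; in characteristic two, where the sign factor is dropped from the formula, this is simply the classical Burnside count of all orbits.

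It remains to identify $\sum_{d \ge 0} \md{M_{d}^{\sigma}}\, t^{d}$ with $1/\det(1 - \sigma t)$. Writing $\sigma$ as a product of disjoint cycles of lengths $\ell_{1}, \ldots, \ell_{c}$, a monomial is fixed by $\sigma$ precisely when it is a product of powers of the $c$ cycle-products $\prod_{i \in C_{j}} x_{i}$, which are algebraically independent of degrees $\ell_{1}, \ldots, \ell_{c}$; hence the generating function is $\prod_{j = 1}^{c}(1 - t^{\ell_{j}})^{-1}$, and this equals $\det(1 - \sigma t)^{-1}$ because the permutation matrix of $\sigma$ has characteristic polynomial $\prod_{j}(1 - t^{\ell_{j}})$. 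Multiplying the previous display by $t^{d}$ and summing over $d$ then gives both asserted formulas. The one point demanding care is the sign-twisted Burnside argument --- in particular the vanishing $\sum_{\sigma \in H}\sign(\sigma) = 0$ for $H \not\subset \alt_{n}$, and the observation that the displayed identity is an equality of integers rather than of elements of $k$, which is what lets the conclusion go through even in the modular case; the rest is routine bookkeeping.
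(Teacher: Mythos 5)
Your proof is correct, but it takes a more self-contained route than the paper. The paper's own proof is essentially a reduction-plus-citation: since the $\sign$-good monomial orbits (and hence the Hilbert function of $S^{G}_{\sign}$, by \Cref{prop:dimension-good-orbits}) are independent of the characteristic once $\chr(k) \neq 2$, one may pass to characteristic zero and invoke the classical Molien formulas for invariants and semi-invariants. You instead prove the Molien-type formula directly for permutation actions: after the same reduction to counting $\sign$-good orbits of monomials in each degree, you establish a sign-twisted Burnside count --- exchanging the order of summation and using that $\sum_{\sigma \in H}\sign(\sigma)$ vanishes unless $H \subset \alt_{n}$ (and that $\sign$-goodness is constant along orbits, since $\ker(\sign)\cap G$ is normal) --- and then identify $\sum_{d}\md{M_{d}^{\sigma}}t^{d}$ with $\det(1-\sigma t)^{-1}$ via the cycle decomposition. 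What your argument buys is independence from the cited references and a transparent explanation of why the formula, an identity of integers computed over $\mathbb{Q}$, is insensitive to the modular setting; what the paper's route buys is brevity, outsourcing the Burnside/cycle-index bookkeeping to standard results while isolating the one genuinely new observation, namely the characteristic-independence of the $\sign$-good orbits.
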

	\begin{proof} 
		It is known that for permutation actions, the Hilbert series of $S^{G}$ can be computed over characteristic zero using Molien's formula, see \cite[Proposition 4.3.4]{Smith:PolynomialInvariantsBook}. 
		This proves the statement for characteristic two. 

		Now let us assume that $\chr(k) \neq 2$. 
		As noted, the $\sign$-good monomials are then independent of characteristic, and thus, 
		we may
		compute the Hilbert series using Molien's formula for semi-invariants in characteristic zero \cite[Theorem~2.5.3]{Benson:PolynomialInvariantsBook}. 
	\end{proof}

	\begin{thm} \label{thm:invariants-of-cohomology-dual-sign}
		If $G \le \sym_{n}$ acts on $S$ via permutations, then we have an isomorphism of graded $k$-vector spaces
		\begin{equation*} 
			H_{\mathfrak{m}}^{n}(S)^{G}(-n) \simeq 
			{(S^{G}_{\sign})}^{\ast}.
		\end{equation*}
	\end{thm}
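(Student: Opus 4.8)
The plan is to combine the graded-duality description of $H^{n}_{\mathfrak m}(S)$ coming from \Cref{cor:invariants-of-H-general} with the analysis of $\sign$-permutation representations carried out in \Cref{sec:twisted-permutation-representation}. Since $G \le \sym_{n}$, the character $\det$ coincides with $\sign$, so \Cref{cor:invariants-of-H-general} already gives a graded $S^{G}$-isomorphism $H^{n}_{\mathfrak m}(S)^{G}(-n) \cong \bigl((S\otimes\sign)^{\ast}\bigr)^{G}$. It therefore suffices to exhibit an isomorphism of graded $k$-vector spaces $\bigl((S\otimes\sign)^{\ast}\bigr)^{G} \simeq (S^{G}_{\sign})^{\ast}$, which I would do degree by degree.

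In degree $d$, the left-hand side is $\bigl([(S\otimes\sign)^{\ast}]_{d}\bigr)^{G} = \bigl(([S\otimes\sign]_{-d})^{\vee}\bigr)^{G}$, while the right-hand side is $[(S^{G}_{\sign})^{\ast}]_{d} = ([S^{G}_{\sign}]_{-d})^{\vee}$. Now $[S\otimes\sign]_{-d}$ is a $\sign$-permutation representation of $G$, a $\sign$-basis being the monomials of degree $-d$ (the observation recorded just before \Cref{thm:sign-molien}; the space is zero unless $-d \ge 0$, in which case everything below is trivially fine). Hence \Cref{prop:dual-permutation-rep}, applied to $V \coloneqq [S\otimes\sign]_{-d}$, yields a $k$-linear isomorphism $(V^{\vee})^{G} \cong (V^{G})^{\vee}$. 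Finally I would identify $V^{G} = [(S\otimes\sign)^{G}]_{-d} \cong [S^{G}_{\sign^{-1}}]_{-d} = [S^{G}_{\sign}]_{-d}$, using that $(S\otimes\chi)^{G} \cong S^{G}_{\chi^{-1}}$ as graded $S^{G}$-modules and that $\sign^{-1} = \sign$ (its values lie in $\{\pm 1\}$, and in characteristic two $\sign$ is trivial). Stringing these identifications together over all $d$ gives $\bigl((S\otimes\sign)^{\ast}\bigr)^{G} \simeq (S^{G}_{\sign})^{\ast}$, and chaining with \Cref{cor:invariants-of-H-general} completes the argument.

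The only step with genuine content is the appeal to \Cref{prop:dual-permutation-rep}: passing $(-)^{G}$ through the graded dual is \emph{not} legitimate for a general $kG$-module, as the remark following \Cref{prop:dual-permutation-rep} illustrates, and it is exactly the $\sign$-permutation structure of each graded piece of $S\otimes\sign$ — available in every characteristic — that makes the commutation valid here. Everything else is bookkeeping with gradings and the standard identification of $(S\otimes\chi)^{G}$ with $S^{G}_{\chi^{-1}}$; in particular no case split on $\chr(k)$ is needed, since $\sign$ takes values in $\{\pm 1\}$ regardless.
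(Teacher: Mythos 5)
Your argument is correct and is essentially the paper's own proof: reduce via \Cref{cor:invariants-of-H-general} (with $\det = \sign$ for permutation matrices) to commuting $(-)^{G}$ past the graded dual, and justify this degree by degree using the $\sign$-permutation structure of each graded piece together with \Cref{prop:dual-permutation-rep}. The only difference is that you spell out the identification $(S\otimes\sign)^{G}\cong S^{G}_{\sign}$ and the equality $\sign^{-1}=\sign$, which the paper leaves implicit.
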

	Coupled with \Cref{thm:sign-molien}, this yields a formula for the Hilbert function of $H_{\mathfrak{m}}^{n}(S)^{G}$.
	\begin{proof} 
		In view of \Cref{cor:invariants-of-H-general}, one only needs to prove that 
		\begin{equation*} 
			{\left((S \otimes \sign)^{\ast}\right)}^{G} \simeq
			{\left((S \otimes \sign)^{G}\right)}^{\ast},
		\end{equation*}
		and this follows from \Cref{prop:dual-permutation-rep}.
	\end{proof}

	Our next goal is to compare $H_{\mathfrak{m}}^{n}(S)^{G}$ with 
	$H_{\mathfrak{n}}^{n}(S^{G})$, 
	where $\mathfrak{n}$ is the homogeneous maximal ideal of $S^{G}$. 
	To this end, we define the \deff{canonical module} of $S^{G}$ as $\omega_{S^{G}} \coloneqq H_{\mathfrak{n}}^{n}(S^{G})^{\ast}$. 

	We follow \cite{Broer:DirectSummandProperty} to compute the canonical module. 
	By [ibid. Corollary 7], we have
	\begin{equation} \label{eq:canonical-sign-chi}
		\omega_{S^{G}} \cong S^{G}_{\sign/\chi} \theta (-n),
	\end{equation}
	where $\theta \in S$ will be described below, 
	and $\chi \colon G \to k^{\times}$ is the homomorphism defined by
	\begin{equation} \label{eq:definition-chi}
		\chi(\sigma) \coloneqq \frac{\sigma(\theta)}{\theta}.
	\end{equation}

	Let $T \subset G$ be the subset of all transpositions in $G$, and $N \coloneqq \langle T \rangle$ the subgroup generated by the transpositions. 

	We now describe $\theta$ following \cite[\S2.6]{Broer:DirectSummandProperty}: 
	Let $W_{1}, \ldots, W_{s}$ be the different hyperplanes that arise as fixed points of elements of $G$. 
	For each $W_{i}$, 
	let $\alpha_{i} \in [S]_{1}$ be a nonzero linear form that vanishes on $W_{i}$, 
	and let $H_{i} \le G$ be the pointwise stabiliser of $W_{i}$. 
	Let $p$ be the characteristic of $k$ if this is positive, else let $p = 1$. 
	Then, we may write $\md{H_{i}} \coloneqq e_{i} p^{a_{i}}$ with $e_{i}$ coprime to $p$. 
	Because our representation is defined over the prime subfield, we have $\theta = \alpha_{1}^{m_{1}} \cdots \alpha_{n}^{m_{n}}$, 
	where $m_{i} \coloneqq e_{i} + (p - 1) a_{i} - 1$. 

	\begin{prop} \label{prop:description-theta}
		We have
		$\theta = \prod_{(i, j) \in T} (x_{i} - x_{j})$, and thus,
		$\deg \theta = \md{T}$, the number of transpositions in $G$.
	\end{prop}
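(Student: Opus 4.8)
The plan is to carry out Broer's recipe for $\theta$, recalled just above the statement, by explicitly pinning down, in the case of a permutation group, the reflecting hyperplanes $W_{1}, \ldots, W_{s}$, the linear forms $\alpha_{i}$, their pointwise stabilizers $H_{i}$, and the resulting exponents $m_{i}$.

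First I would identify the hyperplanes and their stabilizers. By the subsection on pseudoreflections, the pseudoreflections in $\sym_{n} \le \GL$ are exactly the transpositions, and the transposition $(i\,j)$ fixes pointwise the hyperplane on which $x_{i} - x_{j}$ vanishes. Conversely, any non-identity $\sigma \in G$ fixing a hyperplane pointwise has $\rank(1 - \sigma) = 1$, so it is a pseudoreflection, hence a transposition, and its fixed hyperplane recovers the unordered pair $\{i, j\}$; since distinct transpositions cut out distinct hyperplanes (for $n \ge 2$), the list $W_{1}, \ldots, W_{s}$ is precisely the set of fixed hyperplanes of the elements of $T$. In particular $s = \md{T}$, and for the hyperplane attached to $(i\,j)$ we may take $\alpha = x_{i} - x_{j}$. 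The same dichotomy shows that the pointwise stabilizer $H$ of that hyperplane is exactly $\{1, (i\,j)\}$, so that $\md{H} = 2$ for every reflecting hyperplane.

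With $\md{H_{i}} = 2$ for all $i$, it remains to evaluate Broer's exponent $m_{i} = e_{i} + (p - 1)a_{i} - 1$. If $\chr(k) = 2$, then $p = 2$, and the factorization $2 = e_{i}p^{a_{i}}$ with $e_{i}$ coprime to $p$ forces $e_{i} = 1$ and $a_{i} = 1$, giving $m_{i} = 1 + (2 - 1)\cdot 1 - 1 = 1$. Otherwise $p$ is $1$ or an odd prime, $2$ is coprime to $p$, so $e_{i} = 2$ and $a_{i} = 0$, giving $m_{i} = 2 - 1 = 1$. Hence $m_{i} = 1$ in every characteristic, and therefore $\theta = \prod_{i=1}^{s} \alpha_{i}^{m_{i}} = \prod_{(i,j) \in T}(x_{i} - x_{j})$, of degree $s = \md{T}$.

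There is essentially no serious obstacle here: the whole point is the observation that the pointwise stabilizer of any reflecting hyperplane of a permutation group has order exactly two, after which Broer's exponent is pinned to $1$ uniformly in the characteristic. The only care needed is the case $p = 1$ of characteristic zero, where $a_{i}$ is not determined by $\md{H_{i}} = e_{i}p^{a_{i}}$; this is harmless because the term $(p - 1)a_{i}$ then vanishes regardless of the choice. I would also note that $\prod_{(i,j) \in T}(x_{i} - x_{j})$ is well-defined only up to sign, matching the fact that the $\alpha_{i}$ in \Cref{eq:canonical-sign-chi} are themselves only chosen up to a nonzero scalar.
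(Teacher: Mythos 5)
Your proposal is correct and follows essentially the same route as the paper: identify the reflecting hyperplanes with the transpositions, note that each pointwise stabiliser is $\langle (i\,j)\rangle$ of order $2$, and feed this into Broer's recipe. The only difference is that you spell out the arithmetic $m_{i}=e_{i}+(p-1)a_{i}-1=1$ in each characteristic, which the paper leaves implicit; this is a harmless (and welcome) elaboration, not a different argument.
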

	\begin{proof} 
		The pseudoreflections in $G$ are precisely the transpositions. 
		Given a transposition $\tau = (i, j)$ in $T$, the pointwise-stabiliser of the hyperplane $V^{\tau}$ is precisely $\langle \tau \rangle$, 
		which has order $2$, 
		and the corresponding linear form defining the hyperplane is $x_{i} - x_{j}$. 
		This gives us the required statement. 
	\end{proof}

	\begin{lem} \label{lem:transposition-flips-sign}
		Suppose $\chr(k) \neq 2$, and $f \in S$ satisfies $\tau(f) = -f$ for all $\tau \in T$. 
		Then, $f \in \theta S$.
	\end{lem}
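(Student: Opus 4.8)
The plan is to prove that every irreducible factor of $\theta$ divides $f$, and then to conclude by unique factorisation in $S$. Recall from \Cref{prop:description-theta} that $\theta = \prod_{(i,j) \in T}(x_i - x_j)$.

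First I would fix a single transposition $\tau = (i,j) \in T$ and show that $x_i - x_j$ divides $f$. Consider the quotient ring $\bar{S} \coloneqq S/(x_i - x_j)$ with quotient map $\pi \colon S \twoheadrightarrow \bar{S}$. Since $\tau(x_\ell) \equiv x_\ell \pmod{(x_i - x_j)}$ for every $\ell \in [n]$, and the set of $g \in S$ satisfying $\tau(g) \equiv g \pmod{(x_i - x_j)}$ is a $k$-subalgebra of $S$ containing every variable, that set is all of $S$; hence $\pi \circ \tau = \pi$. Applying $\pi$ to the hypothesis $\tau(f) = -f$ then gives $\pi(f) = \pi(\tau(f)) = -\pi(f)$, so $2\pi(f) = 0$ in $\bar{S}$. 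As $\chr(k) \neq 2$, the scalar $2$ is a unit, which forces $\pi(f) = 0$, i.e., $x_i - x_j$ divides $f$ in $S$.

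To finish, I would observe that as $(i,j)$ ranges over the (unordered) transpositions of $G$, the linear forms $x_i - x_j$ are pairwise non-associate irreducible elements of the polynomial ring $S$, hence pairwise coprime primes. Since each of them divides $f$ by the previous step and $S$ is a unique factorisation domain, their product $\theta$ divides $f$; therefore $f \in \theta S$, as claimed.

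I do not anticipate a genuine obstacle here: the substance is the classical fact that a polynomial sent to its negative by the transposition interchanging $x_i$ and $x_j$ must vanish modulo $x_i - x_j$ once $2$ is invertible, together with the routine remark that distinct transpositions give non-associate linear forms, so that divisibility by each one promotes to divisibility by their product. The only point needing a moment's care is verifying that $\tau$ acts as the identity on $\bar{S}$, which is precisely what legitimises pushing the hypothesis $\tau(f) = -f$ down to the quotient.
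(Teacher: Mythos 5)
Your proof is correct and is essentially the paper's argument: for each transposition $(i,j) \in T$ the relation $\tau(f) = -f$ forces $f$ to vanish modulo $x_i - x_j$ (using that $2$ is invertible), hence $x_i - x_j$ divides $f$, and coprimality of these linear forms for distinct transpositions gives $\theta \mid f$. You merely spell out in more detail the passage to the quotient $S/(x_i - x_j)$, which the paper phrases as ``$f$ vanishes when $x_i = x_j$.''
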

	\begin{proof} 
		Let $(i, j) \in T$ be a transposition.
		The hypothesis on $f$ implies that $f$ vanishes when $x_{i} = x_{j}$, and in turn, $x_{i} - x_{j}$ divides $f$. 
		As these factors are coprime for distinct transpositions, we get the desired result.
	\end{proof}

	\begin{cor} \label{cor:description-sign-chi}
		If $\chr(k) = 2$, then $S^{G}_{\sign/\chi} = S^{G}$.
		If $\chr(k) \neq 2$,
		then $S^{G}_{\sign/\chi} \theta = S^{G}_{\sign}$. 
	\end{cor}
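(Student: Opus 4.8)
The plan is to treat the two characteristics separately: the characteristic-two case by showing the twisting character $\chi$ is trivial, and the odd-characteristic case by a direct double inclusion.

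When $\chr(k) = 2$ I would first note that $\sign$ is trivial, so it suffices to prove $\chi$ is trivial, i.e.\ $\sigma(\theta) = \theta$ for all $\sigma \in G$. Using the explicit form $\theta = \prod_{(i,j) \in T}(x_{i} - x_{j})$ from \Cref{prop:description-theta}, together with the fact that $x_{i} - x_{j} = x_{i} + x_{j}$ is symmetric in $i$ and $j$ in characteristic two, the element $\sigma$ carries the factor indexed by a transposition $(i,j) \in T$ to the factor indexed by $(\sigma(i), \sigma(j))$. Since conjugation by $\sigma$ permutes the set $T$ of transpositions lying in $G$, the map $\sigma$ merely permutes the factors of $\theta$ and hence fixes $\theta$. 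Therefore $\sign/\chi$ is trivial and $S^{G}_{\sign/\chi} = S^{G}$.

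When $\chr(k) \neq 2$ I would prove $S^{G}_{\sign/\chi}\,\theta = S^{G}_{\sign}$ by showing each side is contained in the other. For the inclusion $\subseteq$: if $g \in S^{G}_{\sign/\chi}$ then $\sigma(\theta g) = \sigma(\theta)\sigma(g) = \chi(\sigma)\,\theta \cdot (\sign/\chi)(\sigma)\,g = \sign(\sigma)\,\theta g$, so $\theta g \in S^{G}_{\sign}$. For the inclusion $\supseteq$: given $f \in S^{G}_{\sign}$, each transposition $\tau \in T$ satisfies $\tau(f) = \sign(\tau)\,f = -f$, so \Cref{lem:transposition-flips-sign} yields $g \in S$ with $f = \theta g$; from $\chi(\sigma)\,\theta\,\sigma(g) = \sigma(\theta g) = \sigma(f) = \sign(\sigma)\,f = \sign(\sigma)\,\theta g$ and the fact that $S$ is a domain with $\theta \neq 0$, cancelling $\theta$ gives $\sigma(g) = (\sign/\chi)(\sigma)\,g$, i.e.\ $g \in S^{G}_{\sign/\chi}$.

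I do not expect any real obstacle: the inclusion $\supseteq$ in odd characteristic is exactly \Cref{lem:transposition-flips-sign}, and the characteristic-two case reduces to \Cref{prop:description-theta}. The one point to state with care is that conjugation by any $\sigma \in G$ sends transpositions of $G$ to transpositions of $G$, so that $\sigma$ genuinely permutes the factors of $\theta$; and that fixing an ordering within each factor $x_{i} - x_{j}$ (needed to make $\theta$ a well-defined element) introduces only an overall sign, which affects neither $\chi$ nor the displayed module equality.
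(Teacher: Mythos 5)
Your proposal is correct and follows essentially the same route as the paper: the inclusion $S^{G}_{\sign/\chi}\theta \subset S^{G}_{\sign}$ from $\theta \in S^{G}_{\chi}$, the reverse inclusion via \Cref{lem:transposition-flips-sign}, and triviality of both characters in characteristic two. The only cosmetic difference is that in characteristic two the paper simply observes $\chi$ takes values in $\mathbb{F}_{2}^{\times}$ (hence is trivial), whereas you verify $\sigma(\theta)=\theta$ directly by noting that conjugation permutes the transpositions and hence the factors of $\theta$; both are fine.
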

	\begin{proof} 
		If $\chr(k) = 2$, then $\chi$ must be valued in $\mathbb{F}_{2}^{\times}$, and thus, $\chi$ and $\sign$ are both trivial. 

		Now, suppose that $\chr(k) \neq 2$. 
		By definition, we have that $\theta \in S^{G}_{\chi}$, and thus, $S^{G}_{\sign/\chi} \theta \subset S^{G}_{\sign}$. 
		The reverse inclusion follows similarly, in view of \Cref{lem:transposition-flips-sign}.
	\end{proof}

	\begin{cor} \label{cor:canonical-permutation}
		Let $G \le \sym_{n}$ act on the polynomial ring 
		$S = k[x_{1}, \ldots, x_{n}]$ by permutations. 
		Let $c$ be the number of transpositions in $G$. 
		We have the following isomorphisms as graded $S^{G}$-modules:
		\begin{enumerate}[label=(\arabic*)]
			\item 
			If $\chr(k) = 2$, 
			then $\omega_{S^{G}} \cong S^{G}(-c-n)$; 
			equivalently,
			$H_{\mathfrak{n}}^{n}(S^{G}) \cong (S^{G})^{\ast}(c+n)$.
			\item 
			If $\chr(k) \neq 2$, 
			then $\omega_{S^{G}} \cong S^{G}_{\sign}(-n)$;
			equivalently,
			$H_{\mathfrak{n}}^{n}(S^{G}) \cong (S^{G}_{\sign})^{\ast}(n)$.
		\end{enumerate}
	\end{cor}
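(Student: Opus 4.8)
The plan is to read off both statements from Broer's formula \eqref{eq:canonical-sign-chi}, namely $\omega_{S^{G}} \cong S^{G}_{\sign/\chi}\theta(-n)$, once the right-hand side has been made explicit by \Cref{cor:description-sign-chi} and \Cref{prop:description-theta}, and then to deduce the reformulations in terms of $H^{n}_{\mathfrak{n}}(S^{G})$ by taking graded duals.

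First I would treat the case $\chr(k) \neq 2$. Here \Cref{cor:description-sign-chi} gives the equality of $S^{G}$-submodules $S^{G}_{\sign/\chi}\theta = S^{G}_{\sign}$ inside $S$, so substituting into \eqref{eq:canonical-sign-chi} immediately yields $\omega_{S^{G}} \cong S^{G}_{\sign}(-n)$. Next, when $\chr(k) = 2$, both $\sign$ and $\chi$ are trivial (as observed in the proof of \Cref{cor:description-sign-chi}), so $\theta \in S^{G}$ and $S^{G}_{\sign/\chi} = S^{G}$; hence $S^{G}_{\sign/\chi}\theta$ is just the principal ideal $\theta S^{G}$ of $S^{G}$. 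Since $S$ is a domain, multiplication by the nonzero element $\theta$ is an injective $S^{G}$-linear map raising degree by $\deg \theta$, so $\theta S^{G} \cong S^{G}(-\deg\theta)$; by \Cref{prop:description-theta} we have $\deg\theta = c$, and combining with the shift by $-n$ in \eqref{eq:canonical-sign-chi} gives $\omega_{S^{G}} \cong S^{G}(-c-n)$.

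For the ``equivalently'' clauses I would invoke the definition $\omega_{S^{G}} = H^{n}_{\mathfrak{n}}(S^{G})^{\ast}$ together with the facts that $H^{n}_{\mathfrak{n}}(S^{G})$ has finite-dimensional graded components (so the graded-dual functor is involutive on it and $H^{n}_{\mathfrak{n}}(S^{G}) \cong \omega_{S^{G}}^{\ast}$) and that $(M(i))^{\ast} = M^{\ast}(-i)$. Applying $(-)^{\ast}$ to the two isomorphisms just obtained then produces $H^{n}_{\mathfrak{n}}(S^{G}) \cong (S^{G})^{\ast}(c+n)$ and $H^{n}_{\mathfrak{n}}(S^{G}) \cong (S^{G}_{\sign})^{\ast}(n)$, respectively. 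I do not expect a real obstacle here, since the corollary is essentially a bookkeeping assembly of the preceding results; the only points deserving a line of justification are that \Cref{cor:description-sign-chi} is an honest equality of submodules of $S$ (so the twist $(-n)$ transports unchanged), that multiplication by $\theta$ realises exactly the degree shift $c$, and that $H^{n}_{\mathfrak{n}}(S^{G})$ is component-wise finite-dimensional so that double-dualising is harmless.
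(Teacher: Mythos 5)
Your proposal is correct and follows essentially the same route as the paper: the paper's proof likewise just combines Broer's formula \eqref{eq:canonical-sign-chi} with \Cref{cor:description-sign-chi} (and, implicitly, \Cref{prop:description-theta} for $\deg\theta = c$), and obtains the statements for $H^{n}_{\mathfrak{n}}(S^{G})$ by dualising via $\omega_{S^{G}}^{\ast} \cong H^{n}_{\mathfrak{n}}(S^{G})$. Your added justifications (the identification $\theta S^{G} \cong S^{G}(-c)$ in characteristic two and the harmlessness of double-dualising) are exactly the bookkeeping the paper leaves implicit.
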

	\begin{proof} 
		The descriptions of $\omega_{S^{G}}$ follow from \Cref{eq:canonical-sign-chi} and \Cref{cor:description-sign-chi}, and the rest from $\omega_{S^{G}}^{\ast} \cong H_{\mathfrak{n}}^{n}(S^{G})$.
	\end{proof}

	Putting this together with the isomorphism
	$H_{\mathfrak{m}}^{n}(S)^{G}(-n) \cong (S^{G}_{\sign})^{\ast}$
	from \Cref{thm:invariants-of-cohomology-dual-sign} gives us:

	\begin{cor} \label{cor:hilbs-invariants-cohomologies}
		Let $G \le \sym_{n}$ act on $S$ by permutations. 
		Let $c$ be the number of transpositions in $G$. 
		We have the following isomorphisms as graded $k$-vector spaces: 
		\begin{enumerate}[label=(\arabic*)]
			\item 
			If $\chr(k) = 2$, 
			then $H_{\mathfrak{m}}^{n}(S)^{G} \simeq H_{\mathfrak{n}}^{n}(S^{G})(-c)$.
			\item 
			If $\chr(k) \neq 2$, 
			then $H_{\mathfrak{m}}^{n}(S)^{G} \simeq H_{\mathfrak{n}}^{n}(S^{G})$. \qed
		\end{enumerate}
	\end{cor}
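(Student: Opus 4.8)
The plan is to assemble two isomorphisms that have already been established and to keep careful track of the grading shifts. The first ingredient is \Cref{thm:invariants-of-cohomology-dual-sign}, which after twisting by $(n)$ reads $H_{\mathfrak{m}}^{n}(S)^{G} \simeq (S^{G}_{\sign})^{\ast}(n)$ as graded $k$-vector spaces. The second ingredient is \Cref{cor:canonical-permutation}, which computes $H_{\mathfrak{n}}^{n}(S^{G})$ outright: it is $(S^{G}_{\sign})^{\ast}(n)$ when $\chr(k) \neq 2$, and $(S^{G})^{\ast}(c+n)$ when $\chr(k) = 2$. So the entire content of the corollary is to line up these two descriptions and read off the shift.

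First I would dispose of the case $\chr(k) \neq 2$. Here the two descriptions match on the nose: \Cref{thm:invariants-of-cohomology-dual-sign} gives $H_{\mathfrak{m}}^{n}(S)^{G} \simeq (S^{G}_{\sign})^{\ast}(n)$, while \Cref{cor:canonical-permutation}(2) gives $H_{\mathfrak{n}}^{n}(S^{G}) \cong (S^{G}_{\sign})^{\ast}(n)$ as graded $S^{G}$-modules, hence in particular as graded $k$-vector spaces. Comparing Hilbert functions yields $H_{\mathfrak{m}}^{n}(S)^{G} \simeq H_{\mathfrak{n}}^{n}(S^{G})$ with no shift.

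Next I would handle $\chr(k) = 2$. In this characteristic $\sign$ is the trivial character, so $S^{G}_{\sign} = S^{G}$, and \Cref{thm:invariants-of-cohomology-dual-sign} becomes $H_{\mathfrak{m}}^{n}(S)^{G} \simeq (S^{G})^{\ast}(n)$. Meanwhile \Cref{cor:canonical-permutation}(1) gives $H_{\mathfrak{n}}^{n}(S^{G}) \cong (S^{G})^{\ast}(c+n)$; twisting this by $(-c)$ and using that shifts compose additively ($(c+n)+(-c) = (n)$) gives $H_{\mathfrak{n}}^{n}(S^{G})(-c) \cong (S^{G})^{\ast}(n)$. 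Comparing with the previous display, $H_{\mathfrak{m}}^{n}(S)^{G} \simeq H_{\mathfrak{n}}^{n}(S^{G})(-c)$.

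The only point demanding care --- and the closest thing to an obstacle --- is the bookkeeping of the shifts: one must recall the convention $[M(i)]_{d} = [M]_{i+d}$, that the graded dual reverses degrees, and that $(c+n)+(-c) = (n)$ is exactly what forces the shift by $(-c)$ to surface in characteristic two. It is also worth stating the conclusion only at the level of graded $k$-vector spaces, since \Cref{thm:invariants-of-cohomology-dual-sign} is itself only an isomorphism of graded $k$-vector spaces, even though \Cref{cor:canonical-permutation} supplies a graded $S^{G}$-module isomorphism.
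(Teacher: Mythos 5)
Your proposal is correct and follows exactly the paper's route: the corollary is obtained by combining \Cref{thm:invariants-of-cohomology-dual-sign} with \Cref{cor:canonical-permutation} and tracking the grading shifts, using that $\sign$ is trivial (so $S^{G}_{\sign}=S^{G}$) in characteristic two. Your shift bookkeeping, including $(c+n)+(-c)=(n)$ and the remark that the conclusion is only at the level of graded $k$-vector spaces, matches the paper's intended argument.
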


	Recalling that the transpositions are transvections in characteristic two, 
	and that there are no transvections in $\sym_{n}$ in other characteristics,
	letting $t$ denote the number of transvections gives us the
	characteristic-free statement:
	\begin{equation*} 
		H_{\mathfrak{m}}^{n}(S)^{G} \simeq H_{\mathfrak{n}}^{n}(S^{G})(-t),
	\end{equation*}
	as graded $k$-vector spaces.
	Note that while both the objects above are graded $S^{G}$-modules, they need not be isomorphic as $S^{G}$-modules in the modular case, see \cite[Example 5.2]{GoelJeffriesSingh}. 
	We also remark that outside the realm of permutation groups, even an isomorphism as graded vector spaces may not exist, see \cite[Example 5.3]{GoelJeffriesSingh}. 

	We also deduce the $a$-invariant of $S^{G}$ from \Cref{cor:canonical-permutation}. 
	Recall that the \deff{$a$-invariant} is the largest integer $a$ such that $[H_{\mathfrak{n}}^{n}(S^{G})]_{a} \neq 0$. 
	If $S^{G}$ is Cohen--Macaulay, then the $a$-invariant of $S^{G}$ equals the degree of its Hilbert series. 
	These statements can be found in \cite{BrunsHerzog:a-invariants}. 

	\begin{cor} \label{cor:a-invariant}
		Let $G \le \sym_{n}$ act on $S = k[x_{1}, \ldots, x_{n}]$ by permutations. 
		Let $c$ be the number of transpositions in $G$. 
		\begin{enumerate}[label=(\arabic*)]
			\item If $\chr(k) = 2$, then $a(S^{G}) = -(c + n)$.
			\item If $\chr(k) \neq 2$, then $a(S^{G}) = -(d + n)$, where $d$ is the minimal degree of a monomial in $S$ whose stabiliser is contained in the alternating group $\alt_{n}$. 
			We also have $a(S^{G}) = \deg \Hilb(S^{G})$. 
		\end{enumerate}
		In particular, the $a$-invariant is the same across all characteristics not equal to two.
	\end{cor}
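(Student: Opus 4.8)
The plan is to read off $a(S^{G})$ directly from the computation of $H_{\mathfrak{n}}^{n}(S^{G})$ in \Cref{cor:canonical-permutation}, and then to obtain the equality with $\deg\Hilb(S^{G})$ by reducing to characteristic zero.

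First I would recall that $a(S^{G})$ is the top nonzero degree of $H_{\mathfrak{n}}^{n}(S^{G})$, and that for any graded module $M$ one has $[M^{\ast}]_{j} = ([M]_{-j})^{\vee}$, so $[M^{\ast}]_{j} \neq 0$ exactly when $[M]_{-j} \neq 0$. For part (1), \Cref{cor:canonical-permutation} identifies $H_{\mathfrak{n}}^{n}(S^{G})$ with $(S^{G})^{\ast}(c+n)$; since $S^{G}$ sits in non-negative degrees with $[S^{G}]_{0} = k$, the module $(S^{G})^{\ast}$ has top nonzero degree $0$, and after the shift $a(S^{G}) = -(c+n)$. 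For part (2), \Cref{cor:canonical-permutation} identifies $H_{\mathfrak{n}}^{n}(S^{G})$ with $(S^{G}_{\sign})^{\ast}(n)$, whence $a(S^{G}) = -n - e$, where $e$ is the least degree in which $S^{G}_{\sign}$ is nonzero. By the discussion preceding \Cref{thm:sign-molien} together with \Cref{prop:dimension-good-orbits}, $\dim_{k}[S^{G}_{\sign}]_{e}$ equals the number of $\sign$-good orbits among the degree-$e$ monomials, so $[S^{G}_{\sign}]_{e} \neq 0$ precisely when there is a monomial of degree $e$ whose stabiliser lies in $\alt_{n}$; the least such $e$ is the integer $d$ of the statement (such monomials exist, e.g.\ any one with pairwise distinct exponents). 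Thus $a(S^{G}) = -(d+n)$, and since $d$ depends only on the combinatorics of $G$ acting on monomials, this value is the same in every characteristic not equal to two.

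For the equality $a(S^{G}) = \deg\Hilb(S^{G})$ when $\chr(k) \neq 2$, I would compare $S^{G}$ with $S^{G}_{\mathbb{Q}}$. The Hilbert series of $S^{G}$ is the generating function of the numbers of monomial orbits, hence independent of $k$ \Cite[Proposition 4.3.4]{Smith:PolynomialInvariantsBook}, so $\deg\Hilb(S^{G}) = \deg\Hilb(S^{G}_{\mathbb{Q}})$; and $S^{G}_{\mathbb{Q}}$ is a non-modular invariant ring, hence Cohen--Macaulay, so $a(S^{G}_{\mathbb{Q}}) = \deg\Hilb(S^{G}_{\mathbb{Q}})$ \Cite{BrunsHerzog:a-invariants}. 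Applying the formula of the previous paragraph (which holds over $\mathbb{Q}$ as well), one gets $\deg\Hilb(S^{G}) = \deg\Hilb(S^{G}_{\mathbb{Q}}) = a(S^{G}_{\mathbb{Q}}) = -(d+n) = a(S^{G})$, and the independence-of-characteristic assertion is then immediate from $a(S^{G}) = -(d+n)$. The only step that is not pure bookkeeping is this last passage to characteristic zero; the rest is unwinding the isomorphisms of \Cref{cor:canonical-permutation} through the orbit combinatorics of \Cref{sec:twisted-permutation-representation}, and I do not anticipate any genuine obstacle.
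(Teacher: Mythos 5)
Your proposal is correct and follows essentially the same route as the paper: read off the $a$-invariant from the identifications in \Cref{cor:canonical-permutation}, identify the lowest nonzero degree of $S^{G}_{\sign}$ with $d$ via the $\sign$-good monomial discussion preceding \Cref{thm:sign-molien}, and obtain $a(S^{G}) = \deg\Hilb(S^{G})$ by passing to characteristic zero, where $S^{G}$ is Cohen--Macaulay and the Hilbert series is field-independent. You merely spell out the bookkeeping (the dual/shift conventions and the existence of a monomial with trivial stabiliser) that the paper leaves implicit.
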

	\begin{proof} 
		Statement (1) is clear from \Cref{cor:canonical-permutation}. 
		For (2), we first need to check that the lowest nonzero component of $S^{G}_{\sign}$ is in degree $d$ as defined in the statement of the corollary. 
		This follows from the discussion preceding \Cref{thm:sign-molien}. 
		Thus, $a(S^{G}) = -(d + n)$ is independent of characteristic (not equal to two). 
		In the case of characteristic zero, $S^{G}$ is Cohen--Macaulay, and thus, $a(S^{G}) = \deg \Hilb(S^{G})$. 
	\end{proof}

\section{Quasi-Gorenstein}

	Continuing our earlier hypothesis of $G \le \sym_{n}$ acting on $S \coloneqq k[x_{1}, \ldots, x_{n}]$,
	our next goal is to characterise when $S^{G}$ is \deff{quasi-Gorenstein}, 
	i.e., 
	$\omega_{S^{G}} \cong S^{G}(a)$, 
	for some $a \in \mathbb{Z}$, 
	in which case we must necessarily have $a = a(S^{G})$. 
	Thus, the ring $S^{G}$ is Gorenstein precisely when it is Cohen--Macaulay and quasi-Gorenstein.
	\Cref{cor:canonical-permutation} tells us that $S^{G}$ is always quasi-Gorenstein in characteristic two. 
	In a similar vein as before, we next show that quasi-Gorensteinness is independent of the base field in all other characteristics, and thus, we may reduce to characteristic zero where we have the theorems of Stanley \cite{Stanley:Invariants} and Watanabe \cite{Watanabe:Gorenstein-I-II} characterising the Gorenstein property of invariant rings.	

	\begin{thm} \label{thm:quasi-Gorenstein-independent}
		Let $G \le \sym_{n}$ act by permutations.
		If $\chr(k) \neq 2$, then 
		the following are equivalent:
		\begin{enumerate}[label=(\arabic*)]
			\item The ring $k[x_{1}, \ldots, x_{n}]^{G}$ is quasi-Gorenstein.
			\item The character $\sign/\chi$ is trivial. 
			\item The ring $\mathbb{Q}[x_{1}, \ldots, x_{n}]^{G}$ is quasi-Gorenstein, equivalently, Gorenstein.
		\end{enumerate}
	\end{thm}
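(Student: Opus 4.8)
The plan is to deduce $(1) \Leftrightarrow (2) \Leftrightarrow (3)$ from the single fact that $\mathbb{Q}[x_{1}, \ldots, x_{n}]^{G}$ is Gorenstein exactly when $\sign/\chi$ is trivial, using that condition $(2)$ is manifestly characteristic-free (in characteristic $\ne 2$) and that Broer's formula \Cref{eq:canonical-sign-chi} is available over every field.

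\textbf{The easy implications and characteristic-independence.} First I would record that multiplying by the nonzerodivisor $\theta$, which has degree $c$, turns \Cref{eq:canonical-sign-chi} and \Cref{cor:description-sign-chi} into $\omega_{S^{G}} \cong S^{G}_{\sign/\chi}(-c-n)$; hence $S^{G}$ is quasi-Gorenstein precisely when $S^{G}_{\sign/\chi}$ is a free $S^{G}$-module of rank one. This gives $(2) \Rightarrow (1)$ immediately, since $S^{G}_{\sign/\chi} = S^{G}$ when $\sign/\chi$ is trivial; running the same argument over $\mathbb{Q}$, where $S^{G}_{\mathbb{Q}}$ is Cohen--Macaulay because the action is nonmodular, gives $(2) \Rightarrow (3)$. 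Next I would observe that $(2)$ depends only on $G$ as a subgroup of $\sym_{n}$: each $\sigma(\theta)/\theta$ equals $\pm 1$ because $G$ normalises its set of transpositions, and these two scalars are distinct in any field of characteristic $\ne 2$, so the truth of ``$\sign/\chi$ is trivial'' is the same over $k$ and over $\mathbb{Q}$. (A short direct computation shows $\chi(\tau) = -1 = \sign(\tau)$ for every transposition $\tau$ of $G$, so $\sign/\chi$ always factors through $G/N$, though it need not be trivial there; I will use this below.)

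\textbf{Closing the cycle.} It remains to prove $(1) \Rightarrow (3)$ and $(3) \Rightarrow (2)$. For the former, assume $S^{G}$ is quasi-Gorenstein; combining $\omega_{S^{G}} \cong S^{G}(a)$, with $a = a(S^{G})$, and $\omega_{S^{G}} \cong S^{G}_{\sign}(-n)$ from \Cref{cor:canonical-permutation} shows that $\Hilb(S^{G}_{\sign}, t)/\Hilb(S^{G}, t) = t^{-a-n}$. Since $\chr(k) \ne 2$, both of these Hilbert series are independent of $k$ --- the first by \Cref{thm:sign-molien} (equivalently, by the twisted-orbit-sum description of $S^{G}_{\sign}$), the second by the orbit-sum description of $S^{G}$ --- so the same monomial identity holds over $\mathbb{Q}$, whence $\Hilb(\omega_{S^{G}_{\mathbb{Q}}}, t) = t^{-a}\Hilb(S^{G}_{\mathbb{Q}}, t)$. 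As $S^{G}_{\mathbb{Q}}$ is a Cohen--Macaulay graded domain, graded local duality gives $\Hilb(\omega_{S^{G}_{\mathbb{Q}}}, t) = (-1)^{n}\Hilb(S^{G}_{\mathbb{Q}}, 1/t)$, so the Hilbert series of $S^{G}_{\mathbb{Q}}$ is palindromic up to sign and shift, and Stanley's symmetry criterion forces $S^{G}_{\mathbb{Q}}$ to be Gorenstein. For $(3) \Rightarrow (2)$: if $S^{G}_{\mathbb{Q}}$ is Gorenstein then $S^{G}_{\sign/\chi, \mathbb{Q}}$ is free by the first paragraph; since $N$ is generated by the pseudoreflections of $G$ and the setting is nonmodular, $S^{N}_{\mathbb{Q}}$ is a polynomial ring on which $G/N$ acts without pseudoreflections, and $S^{G}_{\sign/\chi,\mathbb{Q}}$ is the module of $\overline{\sign/\chi}$-semi-invariants for this action; its class in the divisor class group of $S^{G}_{\mathbb{Q}}$ vanishes only when $\overline{\sign/\chi}$ is trivial, which is the content of Watanabe's theorem in this situation, so $\sign/\chi$ is trivial. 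Chaining these together with $(2)_{k} \Leftrightarrow (2)_{\mathbb{Q}}$ yields the theorem.

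\textbf{Main obstacle.} The step I expect to be the bottleneck is $(3) \Rightarrow (2)$: matching triviality of the divisor class of $S^{G}_{\sign/\chi}$ with triviality of the character $\sign/\chi$. Everything else is bookkeeping with Broer's formula and with Hilbert series (and citing Stanley's criterion and graded local duality), but this implication genuinely requires the class-group description of invariant rings of pseudoreflection-free actions on polynomial rings, equivalently the sharp form of Watanabe's theorem; in writing it up I would be careful to verify that $G/N$ really does act on the polynomial ring $S^{N}$ with no pseudoreflections.
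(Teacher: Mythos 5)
Your forward implications and the field-independence of (2) are fine and agree with the paper's key observation (the paper likewise notes that $\sigma(\theta)/\theta = \pm 1$, via an inversion count, so that triviality of $\sign/\chi$ can be tested over any field with $1 \neq -1$), and your (1)$\Rightarrow$(3) --- transferring the identity $\Hilb(S^{G}_{\sign},t) = t^{-a-n}\Hilb(S^{G},t)$ to $\mathbb{Q}$ using the characteristic-independence of both Hilbert series, then invoking graded local duality and Stanley's symmetry criterion for Cohen--Macaulay graded domains --- is a correct argument that is genuinely different from anything in the paper. The structural difference is that the paper obtains (1)$\Leftrightarrow$(2) in one stroke by citing Broer's Corollary 7(iv), which holds over every field, so the whole theorem reduces to the field-independence of (2); you avoid quoting that equivalence and instead re-derive its hard direction over $\mathbb{Q}$.

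That re-derivation, your (3)$\Rightarrow$(2), is the one place where the proposal is not yet a proof, as you yourself flag. Two assertions are used without justification. First, that $G/N$ acts on the polynomial ring $S^{N}_{\mathbb{Q}}$ without pseudoreflections: this is true but needs an argument. For instance, since $N = \sym_{A_{1}} \times \cdots \times \sym_{A_{r}}$ (\Cref{prop:SN-polynomial}), the group $G/N$ permutes the elementary symmetric generators $e_{i}(A_{j})$ of $S^{N}$, and a permutation of basis vectors is a pseudoreflection only if it is a transposition of exactly two of them; here that would mean swapping two singleton blocks $\{a\}, \{b\}$ while fixing every other block setwise, and since the ``internal'' part of such an element lies in $N$, this would force $(a\,b) \in G$, contradicting that $\{a\}$ and $\{b\}$ are singleton blocks. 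Second, the statement you attribute to ``Watanabe's theorem in this situation'' --- that for a pseudoreflection-free action on a polynomial ring the module of $\lambda$-semi-invariants is free, equivalently trivial in the class group, only when $\lambda$ is trivial --- is not Watanabe's theorem as usually stated (which concerns $\det$ and linear actions on standard-graded polynomial rings); what you need is injectivity of $\Hom(G/N, \mathbb{Q}^{\times}) \to \operatorname{Cl}\bigl((S^{N})^{G/N}\bigr)$, $\lambda \mapsto [(S^{N})^{G/N}_{\lambda}]$, for pseudoreflection-free actions, i.e., the Nakajima/Broer class-group statement, which is essentially how Broer proves Corollary 7(iv) in the first place. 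Once these two points are supplied your cycle closes; alternatively, citing Broer's Corollary 7(iv) outright (as the paper does) gives (1)$\Leftrightarrow$(2) immediately over both $k$ and $\mathbb{Q}$, and your $\pm 1$ observation then finishes the theorem, making the Stanley and class-group detours unnecessary.
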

	\begin{proof} 
		The equivalence (1) $\Leftrightarrow$ (2) is \cite[Corollary 7 (iv)]{Broer:DirectSummandProperty}. 
		Thus, it suffices to show that the equality $\sign = \chi$ holds over $k$ if and only if it holds over $\mathbb{Q}$. 
		We now analyse $\chi$ using \Cref{prop:description-theta} and \Cref{eq:definition-chi}. 
		For concreteness, we may modify $\theta$ up to a sign and assume that every factor $x_{i} - x_{j}$ of $\theta$ satisfies $i < j$. 
		Then, $\chi(\sigma) = (-1)^{s}$, where $s$ is the cardinality of the set $\{(i, j) \in T : i < j \text{ and } \sigma(i) > \sigma(j)\}$. 
		Thus, both $\sign$ and $\chi$ take values in $\{1, -1\}$, and equality can be checked over any field where $1 \neq -1$. 
	\end{proof}

	\begin{cor} \label{cor:quasi-Gorenstein-characterisation}
		Let $G \le \sym_{n}$ act on $S$ by permutations, and let $c$ be the number of transpositions in $G$. 
		The ring $S^{G}$ is quasi-Gorenstein if and only if $a(S^{G}) = -(c + n)$. 
		More precisely,
		\begin{enumerate}[label=(\arabic*)]
			\item if $\chr(k) = 2$, then $S^{G}$ is quasi-Gorenstein;
			\item if $\chr(k) \neq 2$, 
			then $S^{G}$ is quasi-Gorenstein if and only if $\deg \Hilb(S^{G}) = -(c + n)$. 
			In particular, if $G$ contains no transpositions, then $S^{G}$ is quasi-Gorenstein if and only if $G$ is contained in $\alt_{n}$. 
		\end{enumerate}
	\end{cor}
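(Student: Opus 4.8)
The plan is to read the result off from the two descriptions of $\omega_{S^{G}}$ in \Cref{cor:canonical-permutation}, combined with the equivalence between quasi-Gorensteinness of $S^{G}$ and triviality of the character $\sign/\chi$ furnished by \Cref{thm:quasi-Gorenstein-independent}, and then to convert that character condition into the stated numerical condition on the $a$-invariant by means of \Cref{cor:a-invariant}. I would argue separately in the two characteristics.

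If $\chr(k) = 2$, there is nothing to do beyond citing earlier results: \Cref{cor:canonical-permutation}(1) gives $\omega_{S^{G}} \cong S^{G}(-c-n)$, so $S^{G}$ is quasi-Gorenstein, and \Cref{cor:a-invariant}(1) gives $a(S^{G}) = -(c+n)$. Hence both sides of the asserted equivalence hold, which settles statement (1) and the overarching equivalence in characteristic two.

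Now suppose $\chr(k) \neq 2$. From \Cref{cor:canonical-permutation}(2) we have $\omega_{S^{G}} \cong S^{G}_{\sign}(-n)$ as graded $S^{G}$-modules, so (the $a$-invariant being the negative of the bottom degree of $\omega_{S^{G}}$) $a(S^{G}) = -(d + n)$, where $d$ is the lowest degree in which $S^{G}_{\sign}$ is nonzero; this is exactly the integer $d$ of \Cref{cor:a-invariant}(2). I would then use \Cref{cor:description-sign-chi} to write $S^{G}_{\sign} = S^{G}_{\sign/\chi}\,\theta$ and \Cref{prop:description-theta} to record $\deg \theta = c$; since $\theta$ is nonzero in a domain, multiplication by it is injective, so the bottom degree of $S^{G}_{\sign}$ equals $c$ plus the bottom degree of $S^{G}_{\sign/\chi}$. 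A nonzero constant is a $\psi$-semi-invariant only when $\psi$ is trivial, so $S^{G}_{\sign/\chi}$ is nonzero in degree $0$ exactly when $\sign/\chi$ is trivial; hence $d = c$ if and only if $\sign/\chi$ is trivial, i.e., if and only if $a(S^{G}) = -(c + n)$. Feeding this into the equivalence (1) $\Leftrightarrow$ (2) of \Cref{thm:quasi-Gorenstein-independent} shows that $S^{G}$ is quasi-Gorenstein if and only if $a(S^{G}) = -(c+n)$, and \Cref{cor:a-invariant}(2) rewrites this last condition as $\deg \Hilb(S^{G}) = -(c+n)$, which \Cref{thm:sign-molien} makes computable by Molien's formula. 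This establishes statement (2).

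For the final ``in particular'' clause, if $G$ contains no transpositions then $c = 0$ and $\theta = 1$ by \Cref{prop:description-theta}, whence $\chi$ is trivial and $\sign/\chi = \sign$; \Cref{thm:quasi-Gorenstein-independent} then says $S^{G}$ is quasi-Gorenstein if and only if $\sign$ is trivial, i.e., if and only if $G \le \alt_{n}$. (Alternatively, $a(S^{G}) = -n$ forces $d = 0$, and the unique monomial of degree $0$ is $1$, whose stabiliser is all of $G$, so this happens exactly when $G \le \alt_{n}$.) I do not expect a genuine obstacle: all of the substantive input is already in the cited results, and the only delicate point is the grading bookkeeping — in particular matching the $d$ of \Cref{cor:a-invariant} with the bottom degree of $S^{G}_{\sign}$ and carrying the shift by $n$ through correctly.
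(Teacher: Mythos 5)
Your proof is correct and follows essentially the same route as the paper: read off the canonical module from Broer's formula \Cref{eq:canonical-sign-chi}/\Cref{cor:canonical-permutation}, translate quasi-Gorensteinness into triviality of $\sign/\chi$ via \Cref{thm:quasi-Gorenstein-independent}, and identify that with the degree condition $a(S^{G}) = -(c+n)$, then invoke \Cref{cor:a-invariant}. The only cosmetic differences are that the paper first reduces to $k=\mathbb{Q}$ and settles the ``no transpositions'' clause by citing Watanabe's theorem, whereas you argue directly over $k$ and observe that $\chi$ is trivial when $\theta = 1$, so $\sign/\chi = \sign$ is trivial exactly when $G \le \alt_{n}$ --- both are fine.
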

	\begin{proof} 
		For $\chr(k) = 2$, the statement is a consequence of \Cref{cor:canonical-permutation}. 
		Let us now assume that $\chr(k) \neq 2$. 
		Then, by \Cref{thm:quasi-Gorenstein-independent}, we may assume that $k = \mathbb{Q}$. 
		Equation \Cref{eq:canonical-sign-chi} tells us that $\omega_{S^{G}} \cong S^{G}_{\sign/\chi}(-c-n)$ as $\deg \theta = c$. 
		Thus, we have $a(S^{G}) = -(c + n)$ if and only if $\sign/\chi$ is trivial if and only if $S^{G}$ is quasi-Gorenstein. 
		Finally, if $G$ contains no transpositions, then Watanabe's theorems \cite{Watanabe:Gorenstein-I-II} tell us that $S^{G}$ is (quasi-)Gorenstein precisely when $\det$ is trivial, i.e., 
		precisely when $G \le \alt_{n}$. 
	\end{proof}

\section{Young subgroups} \label{sec:young-subgroups}
	
	In this section, we analyse subgroups of $\sym_{n}$ that are generated by transpositions, also known as \deff{Young subgroups} in the literature. 
	Throughout this section, $N$ denotes a subgroup that is generated by transpositions. 
	Consider the relation $\sim$ defined on $[n]$ by $i \sim j$ if and only if $i = j$ or $(i, j) \in N$. 
	The relation is evidently reflexive and symmetric, it is also transitive in view of the computation $(i, j) (j, k) (i, j) = (i, k)$. 
	We first note that this equivalence relation precisely captures the action of $N$ on $[n]$. 

	\begin{lem} \label{lem:equivalence-classes-N}
		The elements $i, j \in [n]$ are in the same equivalence class of $\sim$ if and only if $i$ in the $N$-orbit of $j$. 
	\end{lem}
	\begin{proof} 
		The nontrivial implication $(\Leftarrow)$ follows from the fact that $N$ is generated by transpositions. 
	\end{proof}

	Given a subset $A \subset [n]$, we define
	\begin{equation*} 
		\sym_{A} \coloneqq \{\sigma \in \sym_{n} : 
		\sigma(i) = i \text{ for all } i \notin A\}.
	\end{equation*}
	Thus, $\sym_{A}$ is a subgroup of $\sym_{n}$ that is abstractly isomorphic to the symmetric group on $\md{A}$ elements. 
	
	Let $[n] = A_{1} \sqcup \cdots \sqcup A_{r}$ be the orbit decomposition under the action on $N$. 
	As the $A_{i}$ form a partition of $[n]$, we see that the group generated by the $\sym_{A_{i}}$ is the internal direct product 
	$\sym_{A_{1}} \times \cdots \times \sym_{A_{r}}$.
	Moreover, by \Cref{lem:equivalence-classes-N}, it follows that this internal direct product is equal to $N$.

	Consider now the action of $N$ on the polynomial ring $S = k[x_{1}, \ldots, x_{n}]$. 
	For $A \subset [n]$, define $E(A) \subset S$ to be the set of elementary symmetric polynomials in the variables $\{x_{i} : i \in A\}$. 
	Then, it is clear that for each orbit $A_{i}$, the set $E(A_{i})$ consists of $N$-invariants. 
	Moreover, the union $\bigcup_{i} E(A_{i})$ has cardinality $n$. 
	By \cite[Theorem 3.10.1]{DerksenKemper}, $S^{N}$ is a polynomial ring on this set. 

	Next, consider an arbitrary subgroup $G \le \sym_{n}$, and let $N \le G$ be the subgroup generated by the transpositions in~$G$. 
	Then, $N$ is a normal subgroup of $G$, and thus, $G$ acts on the set of orbits $\{A_{1}, \ldots, A_{r}\}$. 
	Similarly, considering the permutation action on the polynomial ring 
	$S = k[x_{1}, \ldots, x_{n}]$, 
	we see that $G$ acts on the polynomial ring~$S^{N}$. 
	Moreover, if we write $S^{N} = k[y_{1}, \ldots, y_{n}]$ where $y_{i}$ are the generators obtained from the description of the previous paragraph, then the action of $G$ is a permutation action on these variables: 
	if $\sigma \in G$ and $A_{i}$ is an orbit, 
	then $\sigma \cdot A_{i}$ is another orbit $A_{j}$, 
	and $\sigma$ maps the elementary symmetric polynomials in the $A_{i}$ to the corresponding ones in $A_{j}$. 
	Moreover, as $N$ acts trivially on $S^{N}$, we obtain an action of $G/N$ on $S^{N}$ --- one checks that this permutation action is now without transpositions. 
	We summarise this in the following. 

	\begin{prop} \label{prop:factoring-action-not-small}
		Let $G \le \sym_{n}$ act on $S = k[x_{1}, \ldots, x_{n}]$ via permutations, 
		and let $N \le G$ be the (normal) subgroup generated by the transpositions in $G$. 
		Then, the ring of invariants $S^{N}$ is a polynomial ring generated by the elementary symmetric polynomials in the variables corresponding to the $N$-orbits. 
		The action of $G/N$ on $S^{N}$ is via permutations and without transpositions, and we have
		\begin{equation*} 
			\pushQED{\qed} 
			S^{G} = {(S^{N})}^{G/N}.	\qedhere
			\popQED 
		\end{equation*}
	\end{prop}

\section{The direct summand property}

	In this section, we characterise when $S^{G}$ is a direct summand of $S$, i.e.,
	the inclusion $S^{G} \into S$ splits $S^{G}$-linearly. 
	This always happens in characteristic zero, whereas in positive characteristic, this happens precisely when $S^{G}$ is \emph{$F$-regular}. 
	As a consequence of our characterisation, we show that the Shank--Wehlau conjecture is true for permutation subgroups [\Cref{cor:shank-wehlau-permutations}].

	As before, $G$ is a subgroup of $\sym_{n}$ acting on $S \coloneqq k[x_{1}, \ldots, x_{n}]$ via permutations, and $N$ is the (normal) subgroup generated by the transpositions in $G$.

	\begin{thm} \label{thm:characterise-splitting}
		Let $G \le \sym_{n}$ act on $S = k[x_{1}, \ldots, x_{n}]$ via permutations, 
		and let $N \le G$ be the subgroup generated by the transpositions in $G$. 
		The inclusion $S^{G} \into S$ splits if and only if $\chr(k)$ does not divide $\md{G/N}$.
	\end{thm}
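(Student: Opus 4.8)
The plan is to reduce the splitting question in two stages, using the chain $S^{G} \subset S^{N} \subset S$ and the fact (Proposition~\ref{prop:SN-polynomial}) that $S^{N}$ is a polynomial ring. Since $N$ is generated by pseudoreflections, the extension $S^{N} \into S$ always splits $S^{N}$-linearly --- indeed, for a reflection group the invariant ring is a polynomial subring over which $S$ is free, so in particular a direct summand; hence this part contributes nothing to the obstruction. Moreover, because $N$ is normal in $G$, the quotient $\bar{G} \coloneqq G/N$ acts on $S^{N}$, and one checks that $(S^{N})^{\bar{G}} = S^{G}$. Thus $S^{G} \into S$ splits $S^{G}$-linearly if and only if $S^{G} \into S^{N}$ does (composing with, or factoring through, the already-split map $S^{N} \into S$; one direction is transitivity of split injections, the other uses that a splitting of $S^{G}\into S$ restricted appropriately yields one of $S^{G}\into S^{N}$ since $S^{N}$ is a direct summand of $S$ containing $S^{G}$). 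So the problem becomes: when does $(S^{N})^{\bar{G}} \into S^{N}$ split, where $S^{N}$ is a \emph{polynomial} ring acted on by the finite group $\bar{G}$?

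For this reduced problem I would argue as follows. If $\chr(k) \nmid \md{\bar{G}}$, the action of $\bar{G}$ on $S^{N}$ is nonmodular, so the Reynolds operator $\frac{1}{\md{\bar G}}\sum_{\bar\sigma \in \bar G}\bar\sigma$ is an $(S^{N})^{\bar G}$-linear splitting of the inclusion; this direction is immediate. Conversely, suppose $\chr(k) = p$ divides $\md{\bar{G}}$. Pick a subgroup $P \le \bar{G}$ of order $p$. If $(S^{N})^{\bar{G}} \into S^{N}$ split, then so would $(S^{N})^{\bar{G}} \into (S^{N})^{P}$ (a retraction $S^{N} \to (S^{N})^{\bar G}$ restricts to a retraction on the subring $(S^N)^P$, since $(S^N)^{\bar G} \subset (S^N)^P \subset S^N$), and composing, $(S^{N})^{P} \into S^{N}$ would be a split inclusion of $(S^N)^P$-modules only if... — more carefully, the relevant transitivity statement is: if $A \subset B \subset C$ with $A \into C$ split over $A$ and $A\subset B$, then $A \into B$ is split over $A$. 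Applying this with $A = (S^N)^{\bar G}$, $B = (S^N)^P$, $C = S^N$, a splitting of $(S^N)^{\bar G}\into S^N$ restricts to a splitting of $(S^N)^{\bar G}\into (S^N)^P$. But this does not yet contradict anything; instead I would invoke the group-cohomology obstruction directly: for $P$ cyclic of order $p$ acting on the polynomial ring $R \coloneqq S^{N}$, the inclusion $R^{P}\into R$ splits $R^P$-linearly if and only if the trace (transfer) map $\operatorname{tr}^P\colon R \to R^P$, $r \mapsto \sum_{\bar\tau\in P}\bar\tau(r)$, is surjective; and one shows $\operatorname{tr}^P$ is \emph{not} surjective onto $R^P$, e.g. because its image lies in a proper ideal of $R^P$ (the image is contained in $R^{P} \cap \mathfrak{n}^{?}$, or one exhibits a unit — namely $1$ — not in the image by a degree/constant-term count, since every element of the image of $\operatorname{tr}^P$ on the degree-zero part is $p\cdot(\text{const}) = 0$). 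Hence $R^P \into R$ is not split, so neither is $(S^N)^{\bar G}\into S^N$, so neither is $S^G\into S$.

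The cleanest packaging of the converse is probably: splitting of $S^G \into S$ is equivalent to $S^G$ being a direct summand, which (composing with the split $S^{N}\into S$ and using $S^{G} = (S^N)^{\bar G}$) is equivalent to $(S^N)^{\bar G}$ being a direct summand of $S^{N}$; for this I'd cite the standard fact that for a finite group $\bar G$ acting on \emph{any} ring $R$ in which $R^{\bar G} \into R$ splits $R^{\bar G}$-linearly, every Sylow $p$-subgroup $P$ (for $p = \chr k$) must also have $R^P \into R$ split --- and then for $R$ a domain and $P \neq 1$ of $p$-power order, $R^P \into R$ never splits, because a splitting would force the transfer $\operatorname{tr}^P$ to hit $1 \in R^P$, impossible since $\operatorname{tr}^P$ kills nothing in degree $0$ other than by multiplication by $\md P \equiv 0$. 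This last non-splitting fact is itself a known result (it underlies the modular failure of Cohen--Macaulayness), so I would quote it rather than reprove it.

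I expect the \textbf{main obstacle} to be bookkeeping the reduction $S^{G} = (S^{N})^{\bar G}$ and the transitivity of the direct-summand property along the tower $S^{G}\subset S^{N}\subset S$ --- in particular making precise that ``$S^{N}$ splits off from $S$'' lets one pass freely between splitting of $S^{G}\into S$ and of $S^{G}\into S^{N}$ in \emph{both} directions, and that the relevant $p$-subgroup obstruction for $\bar G$ acting on the polynomial ring $S^{N}$ can be cited cleanly; the arithmetic direction ($\chr k \nmid \md{G/N} \Rightarrow$ split, via the Reynolds operator on $S^{N}$) is routine.
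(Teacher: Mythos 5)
Your ``if'' direction is fine (Reynolds operator for $\bar G = G/N$ acting on $S^{N}$, composed with a splitting of $S^{N} \into S$, which exists since $S^{N}$ is polynomial and $S$ is free over it with $1$ as part of a homogeneous basis), and the identification $S^{G} = (S^{N})^{\bar G}$ is correct. The genuine gap is in the ``only if'' direction, where both facts you propose to cite are false. First, splitting for the whole group does \emph{not} descend to ($p$-)subgroups: with $\chr(k)=3$, the inclusion $k[x_{1},x_{2},x_{3}]^{\sym_{3}} \into k[x_{1},x_{2},x_{3}]$ splits, but for the Sylow $3$-subgroup $\alt_{3}$ the inclusion $S^{\alt_{3}} \into S$ does not split (this is exactly the case $N=1$, $\md{G/N}=3$ of the theorem, and is a standard example). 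Your transitivity remark only yields a splitting of $(S^{N})^{\bar G} \into (S^{N})^{P}$ over $(S^{N})^{\bar G}$, which, as you half-notice, is not the statement you need. Second, the claim that for a nontrivial $p$-group $P$ in characteristic $p$ the inclusion $R^{P} \into R$ never splits ``because a splitting forces the transfer to hit $1$'' is also false: splitting does not imply surjectivity of the transfer. Take $\chr(k)=2$ and $P = \langle (12) \rangle$ acting on $R = k[x_{1},x_{2}]$: the inclusion $R^{P} \into R$ splits ($R$ is free over $R^{P}$ with basis $1, x_{1}$), yet the image of $\operatorname{tr}^{P}$ lies in the homogeneous maximal ideal of $R^{P}$. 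Indeed, if your claimed fact were true it would contradict \Cref{cor:shank-wehlau-permutations} of this very paper, since $p$-groups generated by transpositions do split. The implication ``transfer surjective $\Rightarrow$ split'' is the true one; its converse fails precisely in reflection-type situations, and ruling those out for the (non-linear) action of $\bar G$ on $S^{N}$ is the real content of the theorem.

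What your outline is missing is exactly the input the paper uses: Broer's Theorem~2 of \Cite{Broer:DirectSummandProperty}, which states that for $N$ the subgroup generated by the pseudoreflections of $G$, the inclusion $S^{G} \into S$ splits if and only if $S^{N} \into S$ splits and $\chr(k) \nmid \md{G/N}$. The paper's proof is simply that citation together with \Cref{prop:SN-polynomial} (so $S^{N} \into S$ splits automatically). The hard ``only if'' half of Broer's theorem is precisely the step you are trying to improvise with the subgroup-reduction and transfer arguments above; as written, your route does not close, and you would either need to quote Broer's result (or his Theorem~1/trace-ideal criterion applied to the tower $S^{G} \subset S^{N} \subset S$) or supply a genuinely new argument for it.
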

	\begin{proof} 
		By \cite[Theorem 2]{Broer:DirectSummandProperty}, 
		the inclusion $S^{G} \into S$ splits if and only if the inclusion $S^{N} \into S$ splits and $\chr(k)$ does not divide $\md{G/N}$. 
		But $S^{N}$ is regular by \Cref{prop:factoring-action-not-small}, and thus, $S^{N} \into S$ is split. 
	\end{proof}

	\begin{cor} \label{cor:shank-wehlau-permutations}
		If $\chr(k) = p$ and $G \le \sym_{n}$ is a $p$-group such that $S^{G} \into S$ splits, 
		then $S^{G}$ is a polynomial ring.
	\end{cor}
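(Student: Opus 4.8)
The plan is to invoke \Cref{thm:characterise-splitting} and exploit the fact that quotients of $p$-groups are again $p$-groups. By \Cref{thm:characterise-splitting}, the hypothesis that $S^{G} \into S$ splits is equivalent to the condition that $\chr(k) = p$ does not divide $\md{G/N}$, where $N \le G$ is the subgroup generated by the transpositions in $G$. Since $N$ is normal in $G$ and $G$ is a $p$-group, the quotient $G/N$ is again a $p$-group, so $\md{G/N}$ is a power of $p$; the only such power not divisible by $p$ is $p^{0} = 1$. Hence $G = N$.

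It then follows that $G$ is generated by its transpositions, so $S^{G} = S^{N}$, which is a polynomial ring by \Cref{prop:SN-polynomial}. There is essentially no obstacle here: all the work is already contained in \Cref{thm:characterise-splitting}, and the corollary is the combination of that result with the elementary observation that a $p$-group with no nontrivial $p$-power quotient must already equal the subgroup in question, together with the classical fact that invariant rings of Young subgroups are polynomial rings.
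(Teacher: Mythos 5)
Your proposal is correct and follows exactly the paper's argument: apply \Cref{thm:characterise-splitting} to conclude $G = N$ (the paper leaves the elementary $p$-group observation implicit, which you spell out), then invoke \Cref{prop:SN-polynomial}. No differences worth noting.
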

	\begin{proof} 
		By \Cref{thm:characterise-splitting}, we get that $G = N$, and thus, $S^{G} = S^{N}$ is a polynomial ring by \Cref{prop:factoring-action-not-small}. 
	\end{proof}

	\begin{cor}
		If $G \le \sym_{n}$ is such that $k[x_{1}, \ldots, x_{n}]^{G}$ is $F$-regular for all fields $k$ of positive characteristic, 
		then $G$ is generated by transpositions,
		and in turn, $k[x_{1}, \ldots, x_{n}]^{G}$ is a polynomial ring for all fields $k$.
	\end{cor}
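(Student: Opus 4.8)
The plan is to feed the splitting criterion of \Cref{thm:characterise-splitting} a field of each positive characteristic and let the primes exhaust one another. Recall from the opening of this section that for a field $k$ of positive characteristic the inclusion $S^{G} \into S$ splits $S^{G}$-linearly precisely when $S^{G}$ is $F$-regular; so the hypothesis says that $S^{G} \into S$ splits over \emph{every} field $k$ of positive characteristic.

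Now fix an arbitrary prime $p$ and take $k$ to be a field of characteristic $p$, say $k = \mathbb{F}_{p}$. By the previous paragraph, $S^{G} \into S$ splits, so \Cref{thm:characterise-splitting} forces $p \nmid \md{G/N}$. Since $p$ was an arbitrary prime, the positive integer $\md{G/N}$ is divisible by no prime, whence $\md{G/N} = 1$; that is, $G = N$, so $G$ is generated by transpositions. Finally, \Cref{prop:SN-polynomial} gives that $S^{G} = S^{N}$ is a polynomial ring, as claimed.

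There is essentially no obstacle here beyond unwinding definitions; the one point worth flagging is that the conclusion $\md{G/N} = 1$ genuinely uses that the hypothesis ranges over \emph{all} positive characteristics — a single prime would only exclude that one prime as a divisor of $\md{G/N}$ — and that the equivalence ``$F$-regular $\Leftrightarrow$ split'' invoked above is exactly the standard fact recalled at the start of the section, applicable since $S^{G}$ is a finitely generated graded algebra over a field, hence a normal Noetherian domain.
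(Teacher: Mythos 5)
Your argument is correct and is essentially the paper's own proof, just written out in more detail: both use the equivalence between splitting and $F$-regularity recalled at the start of the section, apply \Cref{thm:characterise-splitting} over every prime characteristic to force $\md{G/N} = 1$, and then conclude via \Cref{prop:SN-polynomial}. No issues.
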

	\begin{proof} 
		In view of \Cref{thm:characterise-splitting}, the hypothesis forces $G = N$. 
	\end{proof}

	We compare the above with \cite{Blum-SmithMaques:CM} which says: If $G \le \sym_{n}$ and $k[x_{1}, \ldots, x_{n}]^{G}$ is Cohen--Macaulay for all fields $k$ (of positive characteristic), then $G$ is generated by \emph{bireflections}, i.e., elements $\sigma$ satisfying $\rank(1 - \sigma) \le 2$. 

\section{Differential operators}
	We now discuss differential operators on the ring of permutation invariants, and show that, in a certain sense, these are again independent of the base field. 
	Following \cite{BrennerJeffriesNunezBetancourt}, we recall the definition of differential operators. 
	Let $A$ be a commutative ring, 
	and $R$ a commutative $A$-algebra. 
	Given an $R$-module $M$ and an element $r \in R$, we let
	$r_{M} \colon M \to M$ denote the multiplication-by-$r$ map. 
	For $R$-modules $M$ and $N$, the set of
	\deff{$A$-linear differential operators of order at most $n$} 
	is the subset
	$\DD{R}{A}^{n}(M, N) \subset \Hom_{A}(M, N)$ inductively defined as
	\begin{itemize}
		\item $\DD{R}{A}^{-1}(M, N) \coloneqq 0$;
		\item $\DD{R}{A}^{n}(M, N) \coloneqq 
		\{\delta \in \Hom_{A}(M, N) : 
		\delta \circ r_{M} - r_{N} \circ \delta \in \DD{R}{A}^{n-1}(M, N)
		\text{ for all } r \in R\}$
		for $n \ge 0$. 
	\end{itemize}
	These are $R$-submodules of $\Hom_{A}(M, N)$ with the action of $R$ given by $r \cdot \delta = r_{N} \circ \delta$,
	and the $R$-module of \deff{$A$-linear differential operators} is 
	$\DD{R}{A}(M, N) \coloneqq \bigcup_{n \ge 0} \DD{R}{A}^{n}(M, N)$. 
	Specialising $M = N = R$, one checks that $\DD{R}{A}(R, R)$ is a subring of $\End_{A}(R)$, and we denote this by $\DD{R}{A}$.

	Following \cite[\S5.1]{SmithVanDenBergh}, if $B$ is another $A$-algebra, then there is a natural $B$-linear homomorphism
	\begin{equation*} 
		\DD{R}{A} \otimes_{A} B \to \DD{R \otimes_{A} B}{B}. 
	\end{equation*}
	If $R$ is a flat and finitely generated $\mathbb{Z}$-algebra, 
	and $k$ is any field, then the above gives a $k$-linear injection
	\begin{equation*} 
		\DD{R}{\mathbb{Z}} \otimes_{\mathbb{Z}} k \into \DD{R \otimes_{\mathbb{Z}} k}{k}. 
	\end{equation*} 
	We will show that if $R = \mathbb{Z}[x_{1}, \ldots, x_{n}]^{G}$ for a permutation group $G$, 
	then the above is an isomorphism for every field $k$. 
	In particular, every differential operator on $\mathbb{F}_{p}[x_{1}, \ldots, x_{n}]^{G}$ lifts to one in characteristic zero. 
	We do this by explicitly computing the differential operators and showing that they are obtained as linear combinations of suitable orbit sums. 
	To this end, we first recall a criterion for when differential operators can be lifted to the ambient ring. 
	Recall that a subgroup of $\GL_{n}(k)$ is \deff{small} if it contains no pseudoreflections. 
	The following is a theorem of Kantor \cite{Kantor:1} in the nonmodular case. 
	We state and prove the version we need in the modular case.

	\begin{prop} \label{prop:small-etale-extension} 
		If $G \le \GL_{n}(k)$ is a small subgroup with its natural action on the ring $S \coloneqq k[x_{1}, \ldots, x_{n}]$, 
		then the extension 
		$S^{G} \into S$ is \etale\ in codimension one. 
		Consequently, every differential operator on $S^{G}$ lifts uniquely to one on $S$. 
		More precisely, given $\delta \in \DD{S^{G}}{k}^{n}$, 
		there exists a unique $\partial \in \DD{S}{k}^{n}$ such that 
		$\partial|_{S^{G}} = \delta$. 
	\end{prop}
	\begin{proof} 
		Recall that a finite ring extension $R \into S$ is \deff{\etale} 
		if $S$ is flat over $R$ and
		$\kappa(\mathfrak{p}) \otimes_{R} S$ is a finite product of finite separable algebraic extension fields of $\kappa(\mathfrak{p})$ for every $\mathfrak{p} \in \Spec R$. 
		If $\mathfrak{p} \in \Spec(S^{G})$ is a prime of height one (resp. zero), then the extension $(S^{G})_{\mathfrak{p}} \into S_{\mathfrak{p}}$ is flat as this is a finite extension of DVRs (resp. fields) and the latter is torsion-free. 
		As $G$ is small, the prime $\mathfrak{p}$ is unramified (see, for example, \cite[Theorem 2]{Broer:DirectSummandProperty}), and thus $S_{\mathfrak{p}}/\mathfrak{p} S_{\mathfrak{p}}$ is a separable field extension of $\kappa(\mathfrak{p})$. 
		We thus conclude that $S^{G} \into S$ is \etale\ in codimension one. 

		Suppose now that $R \into S$ is an extension of $k$-algebras that is	\etale\ in codimension one. 
		As in the proof of \cite[Proposition~6.4]{BrennerJeffriesNunezBetancourt}, we note that
		the restriction map $\DD{S}{k}^{n}(S, S) \to \DD{R}{k}(R, S)$ factors as
		\begin{equation*} 
			\DD{S}{k}^{n}(S, S) 
			\cong
			\Hom_{S}(\PP{S}{k}{n}, S)
			\to
			\Hom_{S}(S \otimes_{R} \PP{S}{k}{n}, S)
			\cong
			\Hom_{R}(\PP{R}{k}{n}, S)
			\cong
			\DD{R}{k}^{n}(R, S),
		\end{equation*}
		where $\PP{R}{k}{n}$ is the module of principal parts. 
		The second map is obtained by applying $\Hom_{S}(-, S)$ to the natural map 
		$\PP{R}{k}{n} \leftarrow S \otimes_{R} \PP{S}{k}{n}$, 
		which is an isomorphism in codimension one \cite[Lemma 2.19]{BrennerJeffriesNunezBetancourt}. 
		In turn, the above map is an isomorphism in codimension one. 
		As the source and target are reflexive modules, we are done. 
	\end{proof} 

	If $S$ is a $k$-algebra and $G$ acts on $S$ via $k$-algebra automorphisms, then we obtain an action of $G$ on $\DD{S}{k}$ via conjugation; namely,
	\begin{equation*} 
		(\sigma \cdot \partial)(s) \coloneqq \sigma(\partial(\sigma^{-1}(s)))
	\end{equation*}
	for $(\sigma, \partial, s) \in G \times \DD{S}{k} \times S$. 
	Note that this is simply the restriction of the usual $G$-action on $\Hom_{k}(S, S)$. 
	In turn, we see that that the $G$-invariant operators are precisely the $G$-equivariant operators, i.e., operators satisfying $\sigma(\partial(s)) = \partial(\sigma(s))$. 
	In particular, such an operator $\partial \in (\DD{S}{k})^{G}$ satisfies $\partial(S^{G}) \subset S^{G}$, giving us a $k$-linear map
	\begin{equation*} 
		\Res \colon {(\DD{S}{k})}^{G} \to \DD{S^{G}}{k}. 
	\end{equation*}

	\begin{cor} \label{cor:small-subgroup-diff-invariants}
		If $G \le \GL_{n}(k)$ is a small subgroup with its natural action on the polynomial ring $S \coloneqq k[x_{1}, \ldots, x_{n}]$, 
		then the restriction map $\Res$ is an isomorphism. 
	\end{cor}
	\begin{proof} 
		Injectivity follows directly from \Cref{prop:small-etale-extension}.
		Conversely, given an operator $\delta \in \DD{S^{G}}{k}$, let $\partial \in \DD{S}{k}$ be its extension. 
		We wish to show that $\partial$ is $G$-invariant.
		To this end, given $\sigma \in G$ and $r \in S^{G}$, we note
		\begin{align*} 
			(\sigma \cdot \partial)(r) = \sigma(\partial(\sigma^{-1}(r))) = \sigma(\partial(r)) = \sigma(\delta(r)) = \delta(r).
		\end{align*}
		Thus, $(\sigma \cdot \partial)|_{S^{G}} = \delta$ and \Cref{prop:small-etale-extension} tells us that $\sigma \cdot \partial = \partial$. 
		In other words, $\partial$ is invariant, as desired. 
	\end{proof}

	We recall \cite[Th\'eor\`eme 16.11.2]{EGA4} that if 
	$S \coloneqq A[x_{1}, \ldots, x_{n}]$ is the polynomial ring, 
	then $\DD{S}{A}$ has the $A$-basis:
	\begin{equation*} 
		\{x^{\alpha} \cdot \partial^{\beta}_{x} : \alpha, \beta \in \mathbb{N}^{n}\},
	\end{equation*}
	where 
	$x^{\alpha} \coloneqq x_{1}^{\alpha_{1}} \cdots x_{n}^{\alpha_{n}}$, 
	and $\partial^{\beta}_{x}$ is the $A$-linear operator on $S$ satisfying
	\begin{flalign*} 
		\phantom{\text{for $\gamma \in \mathbb{N}^{n}$.}}
		&& 
		\partial^{\beta}_{x}(x^{\gamma})
		&=
		\binom{\gamma_{1}}{\beta_{1}} 
		\cdots
		\binom{\gamma_{n}}{\beta_{n}} 
		x^{\gamma - \beta}
		&
		\text{for $\gamma \in \mathbb{N}^{n}$.}
	\end{flalign*}
	We refer to the above basis as the 
	\deff{PBW-basis for $\DD{S}{A}$ with respect to $x_{1}, \ldots, x_{n}$}. 

	If $G \le \sym_{n}$ acts via permutations on $S$, then the induced action of $G$ on $\DD{S}{k}$ is via permuting the above basis. 
	Thus, as in \Cref{sec:twisted-permutation-representation}, we see that the invariant subring $(\DD{S}{k})^{G}$ has a $k$-basis given by orbit sums. 
	Combining this with \Cref{cor:small-subgroup-diff-invariants} and \Cref{prop:factoring-action-not-small}, we obtain the following.

	\begin{thm} \label{thm:diff-ops-for-permutation-groups}
		Let $G \le \sym_{n}$ act on $S \coloneqq k[x_{1}, \ldots, x_{n}]$ via permutations, and $R \coloneqq S^{G}$ be the ring of invariants.
		
		If $G$ contains no transpositions, then
		$\DD{R}{k}$ has a $k$-basis consisting of $G$-orbit sums of elements of the PBW-basis with respect to $x_{1}, \ldots, x_{n}$.

		More generally, if $G \le \sym_{n}$ is arbitrary, 
		and $N \unlhd G$ is the subgroup generated by the transpositions in $G$, 
		then $S^{N}$ is the polynomial ring $k[y_{1}, \ldots, y_{n}]$ as described in \Cref{sec:young-subgroups}, 
		and $\DD{R}{k}$ has a $k$-basis consisting of the $G/N$-orbit sums of the elements of the PBW-basis for $\DD{S^{N}}{k}$ with respect to $y_{1}, \ldots, y_{n}$. 
		\qed
	\end{thm}

	\begin{cor} \label{cor:diff-opps-lift}
		Let $G \le \sym_{n}$ act on $S \coloneqq \mathbb{Z}[x_{1}, \ldots, x_{n}]$ via permutations.
		If $R \coloneqq S^{G}$ is the ring of invariants, 
		and~$k$ any field, 
		then the natural map
		\begin{equation*} 
			\Phi \colon 
			\DD{R}{\mathbb{Z}} \otimes_{\mathbb{Z}} k
			\to
			\DD{R \otimes_{\mathbb{Z}} k}{k}
		\end{equation*}
		is an isomorphism.
	\end{cor}
	\begin{proof} 
		We first note that $R \otimes_{\mathbb{Z}} k$ is precisely the ring of invariants $k[x_{1}, \ldots, x_{n}]^{G}$. 
		As before, we may pass to the case where $G$ contains no transpositions. 
		In view of the PBW-basis, we deduce that the natural map 
		$\DD{S}{\mathbb{Z}} \otimes_{\mathbb{Z}} k \to 
		\DD{S \otimes_{\mathbb{Z}} k}{k}$
		is an isomorphism. 
		Taking $G$-invariants induces the desired isomorphism $\Phi$, in view of our description of the invariant differential operators as linear combinations of orbit sums. 
	\end{proof}

\bibliographystyle{amsplain}
\providecommand{\bysame}{\leavevmode\hbox to3em{\hrulefill}\thinspace}
\providecommand{\MR}{\relax\ifhmode\unskip\space\fi MR }
\providecommand{\MRhref}[2]{%
	\href{http://www.ams.org/mathscinet-getitem?mr=#1}{#2}
}
\providecommand{\href}[2]{#2}

\end{document}